\def\Xint#1{\mathchoice
{\XXint\displaystyle\textstyle{#1}}%
{\XXint\textstyle\scriptstyle{#1}}%
{\XXint\scriptstyle\scriptscriptstyle{#1}}%
{\XXint\scriptscriptstyle\scriptscriptstyle{#1}}%
\!\int}
\def\XXint#1#2#3{{\setbox0=\hbox{$#1{#2#3}{\int}$ }
\vcenter{\hbox{$#2#3$ }}\kern-.59\wd0}}
\def\avgint{\Xint-}
\newcommand{\BV}{\mathrm{BV}}
\newcommand{\N}{\mathbb{N}}
\newcommand{\R}{\mathbb{R}}
\newcommand{\Z}{\mathbb{Z}}
\newcommand{\Om}{\Omega}
\newcommand{\eps}{\varepsilon}
\newcommand{\loc}{\mathrm{loc}}
\newcommand{\liploc}{\mathrm{Lip}_{\mathrm{loc}}}
\DeclareMathOperator{\rcapa}{cap}
\DeclareMathOperator{\capa}{Cap}
\DeclareMathOperator{\rad}{rad}
\DeclareMathOperator{\Mod}{Mod}
\DeclareMathOperator{\diam}{diam}
\DeclareMathOperator{\dist}{dist}
\DeclareMathOperator{\inte}{int}
\DeclareMathOperator{\ext}{ext}
\DeclareMathOperator*{\essinf}{ess\,inf}
\DeclareMathOperator{\Lip}{Lip}
\newcommand{\fin}{f^{-1}}
\newtheorem{theorem}{Theorem}[section]
\newtheorem{proposition}[theorem]{Proposition}
\newtheorem{lemma}[theorem]{Lemma}
\theoremstyle{definition}
\newtheorem{defn}[theorem]{Definition}
\theoremstyle{remark}
\newtheorem{remark}[theorem]{Remark}
\numberwithin{equation}{section}
\begin{document}

\title{Duality of moduli and quasiconformal mappings in
metric spaces}
\author{Rebekah Jones and Panu Lahti\footnote{The authors are grateful to Nageswari
Shanmugalingam for advice and many discussions on the topic of the paper.
The second author wishes to acknowledge the hospitality of the
University of Cincinnati, where most of the research for this paper was conducted.
The research was partially funded
by the National Science Foundation (U.S.A.) grants DMS \#1500440
and DMS \#1800161.
}
}
\maketitle

\noindent{\small
{\bf Abstract.}
We prove a duality relation for the moduli of the family of curves connecting two sets
and the family of surfaces separating the sets, in the setting of a complete metric space
equipped with a doubling measure and supporting a Poincar\'e inequality.
Then we apply this to show that quasiconformal mappings can be characterized
by the fact that they quasi-preserve the modulus of certain families of surfaces.
}

\bigskip

\noindent
{\small \emph{Keywords and phrases}: quasiconformal mapping,
modulus of a family of surfaces, finite perimeter,
fine topology, Poincar\'e inequality
}

\medskip

\noindent
{\small Mathematics Subject Classification (2010):
Primary: 30L10; Secondary: 26B30, 31E05.
}

\section{Introduction}

A homeomorphism $f\colon X\to Y$ between two metric spaces $X,Y$ is said to be
quasiconformal if
there is a constant $H\ge 1$ such that for all $x\in X$,
\[
\limsup_{r\to 0^+}\frac{\sup_{y\in \overline{B}(x,r)}d_Y(f(x),f(y))}{\inf_{y\in X\setminus B(x,r)}d_Y(f(x),f(y))}\le H.
\]
In metric measure spaces satisfying suitable conditions such
as Ahlfors regularity and a Poincar\'e inequality,
the study of quasiconformal mappings was begun by Heinonen and Koskela
in~\cite{HK} and by now the literature is extensive,
see for example~\cite{BHW, HK0, HKST, KMS, Wil}.
As in the classical Euclidean setting, there are also other notions of quasiconformality.
For Ahlfors $Q$-regular spaces $X,Y$, a homeomorphism $f\colon X\to Y$ is
said to be \emph{geometric quasiconformal} if
there is a constant $K\ge 1$ such that whenever $\Gamma$ is a family
of curves in $X$, we have
\[
\frac{1}{K}\Mod_Q(f\Gamma)\le \Mod_Q(\Gamma)\le K \Mod_Q(f\Gamma),
\]
For the definition of $Q$-modulus and all other concepts needed in the paper,
we refer to Section \ref{sec:definitions}.
If both $X$ and $Y$ are complete and also support a $Q$-Poincar\'e inequality,
the two notions of quasiconformality are equivalent, see Theorem 9.8 in \cite{HKST}.

A fact that has received much less attention is that quasiconformal
mappings also quasi-preserve the
$\tfrac{Q}{Q-1}$-modulus of certain families of surfaces obtained as
``essential boundaries" of sets of finite perimeter.
This result was proved in Euclidean spaces by Kelly~\cite[Theorem~6.6]{Kelly}.
In the metric space setting, the theory of functions of bounded variation
(BV) and sets of finite perimeter
was first developed by Ambrosio and Miranda \cite{A1,M}.
The authors of the current paper together with Shanmugalingam
extended Kelly's result to metric spaces in \cite{JLS}.

In the current paper, our main goal is to show that the converse holds
as well: if a homeomorphism $f$ quasi-preserves the modulus of families of surfaces,
then it is a quasiconformal mapping.
Since the analogous fact is already known to hold for families of curves,
we invest most of our efforts
in studying the \emph{duality} of moduli of families of curves and surfaces.
Specifically, for a nonempty bounded open set $\Om\subset X$ and two
disjoint sets $E,F\subset \Om$, we consider the family of curves $\Gamma$ joining $E$ and $F$ in $\Om$,
and the family of surfaces $\mathcal L$ separating $E$ and $F$ in $\Om$ in a suitable sense.
Then we prove the following theorem; the precise formulation and assumptions
on the sets $E$ and $F$ are given in Theorem \ref{thm:duality of moduli in text}.
\begin{theorem}\label{thm:duality of moduli}
Let $1<p<\infty$ and suppose $X$ is a complete metric space
equipped with a doubling
measure and supporting a $1$-Poincar\'e inequality.
For some constant $C\ge 1$ depending only on $p$ and the space $X$,
we have
\[
1 \le \Mod_{\frac p{p-1}}(\mathcal{L})^{\frac{p-1}p}\Mod_p(\Gamma)^{\frac 1p}\le C.
\]
\end{theorem}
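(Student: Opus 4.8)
The plan is to prove the two inequalities separately; in both cases the engine is the coarea formula for $\mathrm{BV}$ functions, applied through the two reciprocal ways of manufacturing a function out of a curve family or a surface family. Throughout set $q=\tfrac p{p-1}$, so that the claim reads $1\le\Mod_q(\mathcal L)^{1/q}\Mod_p(\Gamma)^{1/p}\le C$; raising to the power $p$ this becomes the Fulkerson-type statement $1\le\Mod_q(\mathcal L)^{p-1}\Mod_p(\Gamma)\le C^p$. The model to keep in mind is $\Om=(0,1)^n$ with $E,F$ the two opposite faces, where both moduli equal $1$ and the inequality is an equality; the constant $C$ only absorbs the distortion coming from the doubling and Poincar\'e constants.

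\emph{The lower bound.} Fix any $\rho$ admissible for $\Gamma$ and any $\sigma$ admissible for $\mathcal L$; after the standard reduction by a curve family of zero $p$-modulus we may assume $\int_\gamma\rho\,ds\ge1$ for every $\gamma\in\Gamma$. I would set $u(x):=\min\{1,\inf_\gamma\int_\gamma\rho\,ds\}$, the infimum over curves $\gamma$ in $\Om$ from $E$ to $x$. Then $u=0$ on $E$, $u=1$ on $F$, $u$ has $\rho$ as an upper gradient in $\Om$, and since $\Om$ is bounded (so $\mu(\Om)<\infty$) and $\rho\in L^p(\Om)\subset L^1(\Om)$, one has $u\in\mathrm{BV}(\Om)$ with $\|Du\|\le\rho\,\mu$ as measures. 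For a.e.\ $t\in(0,1)$ the superlevel set $\{u>t\}$ has finite perimeter in $\Om$, and — by the assumptions on $E,F$ and the fine continuity of $u$ — it separates $E$ from $F$ in the sense defining $\mathcal L$, so its measure-theoretic boundary lies in $\mathcal L$. The weighted coarea formula then gives
\[
1=\int_0^1\!1\,dt\le\int_0^1\!\Big(\int_{\mbdy\{u>t\}}\sigma\,dP(\{u>t\},\cdot)\Big)dt=\int_\Om\sigma\,d\|Du\|\le\int_\Om\sigma\rho\,d\mu\le\Big(\int_\Om\rho^p\,d\mu\Big)^{\!1/p}\!\Big(\int_\Om\sigma^q\,d\mu\Big)^{\!1/q}\!,
\]
using $\|Du\|\le\rho\,\mu$ and H\"older's inequality. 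Taking infima over admissible $\rho$ and $\sigma$ yields $1\le\Mod_p(\Gamma)^{1/p}\Mod_q(\mathcal L)^{1/q}$.

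\emph{The upper bound.} Let $u$ be the $p$-capacity potential of the condenser $(E,F;\Om)$, i.e.\ the $p$-energy minimizer among functions vanishing on $E$ and equal to $1$ on $F$ (it exists under the stated hypotheses), with $0\le u\le1$, and put $M:=\int_\Om g_u^p\,d\mu=\rcapa_p(E,F,\Om)$. Since $g_u$ is an upper gradient of $u$, after discarding a curve family of zero $p$-modulus it is admissible for $\Gamma$, so $\Mod_p(\Gamma)\le M$. The key point is the flux estimate: there is $c_1>0$, depending only on $p$ and $X$, with
\[
\int_{\mbdy A}g_u^{\,p-1}\,dP(A,\cdot)\ \ge\ c_1 M\qquad\text{for every finite-perimeter }A\text{ separating }E\text{ and }F\text{ in }\Om.
\]
Granting this, $\sigma:=(c_1M)^{-1}g_u^{\,p-1}$ is admissible for $\mathcal L$, and since $(p-1)q=p$ one gets $\Mod_q(\mathcal L)\le\int_\Om\sigma^q\,d\mu=(c_1M)^{-q}\!\int_\Om g_u^p\,d\mu=c_1^{-q}M^{1-q}$. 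Combining this with $\Mod_p(\Gamma)\le M$ and using $(1-q)(p-1)+1=0$,
\[
\Mod_q(\mathcal L)^{\frac{p-1}p}\Mod_p(\Gamma)^{\frac1p}\ \le\ \big(c_1^{-q}M^{1-q}\big)^{\frac{p-1}p}M^{\frac1p}\ =\ c_1^{-1}M^{\frac{(1-q)(p-1)+1}{p}}\ =\ c_1^{-1},
\]
so the theorem holds with $C=c_1^{-1}$.

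\emph{The obstacle.} Everything hinges on the flux estimate. In the smooth Euclidean case it is an identity with $c_1=1$: the Euler--Lagrange equation makes $V:=|\nabla u|^{p-2}\nabla u$ divergence-free off $E\cup F$, so its flux through any separating boundary equals $\int_\Om|\nabla u|^p=M$, while $|V\cdot\nu_A|\le|\nabla u|^{p-1}$. In the metric setting I would realize this through Cheeger's differentiable structure: $p$-harmonicity of $u$ in $\Om\setminus(E\cup F)$ says $\int\langle|du|^{p-2}du,d\varphi\rangle\,d\mu=0$ for Lipschitz $\varphi$ compactly supported there, i.e.\ $V=|du|^{p-2}du\in L^q$ is divergence-free in the Cheeger sense, and a Gauss--Green formula for sets of finite perimeter paired against such vector fields — tested against $u$ and against $\chi_A$ — identifies the boundary flux as $\int_\Om|du|^p\,d\mu$, which is comparable to $M$; finally $g_u\asymp|du|$ turns $|V|$ into $g_u^{\,p-1}$ up to constants. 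The delicate points are the behavior near $\partial\Om$ and near $E,F$ (which is where the precise hypotheses of Theorem~\ref{thm:duality of moduli in text} enter, ensuring the condenser is regular enough and that the relevant superlevel sets and competitors are genuinely ``separating''); a more hands-on, vector-field-free alternative is to compare an arbitrary separating $A$ with the superlevel sets of $u$ by means of the weighted coarea identity $\int_0^1\!\int_{\mbdy\{u>t\}}g_u^{\,p-1}\,dP\,dt=\int_\Om g_u^p\,d\mu$ together with the minimality of $u$. I expect essentially all the work of the theorem to sit in this step.
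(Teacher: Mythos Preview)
Your lower bound follows the same coarea-plus-H\"older scheme as the paper's Proposition~\ref{prop:lower-bound}, but the paper takes $u$ to be the capacitary potential of the condenser rather than your $\rho$-distance function, and this matters precisely at the step you wave past: showing that the superlevel sets $\{u>t\}$ actually lie in $\mathcal L$, i.e.\ that $b_pE\cap\Om\subset\inte(\{u>t\})$ and $b_pF\cap\Om\subset\ext(\{u>t\})$. Your $u$ is merely an $N^{1,p}$ function, hence quasicontinuous, but there is no reason for it to be \emph{finely} continuous, so the phrase ``fine continuity of $u$'' is unjustified. The paper gets this for free because the capacitary potential is a superminimizer near $E$ and a subminimizer near $F$, and lsc-regularized superminimizers are finely continuous (Theorem~\ref{thm:fine continuity}); this is exactly why the fine-topology machinery enters the argument.

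The real gap is in your upper bound, and you diagnose it yourself: the flux estimate $\int g_u^{\,p-1}\,dP(A,\cdot)\ge c_1 M$ for every separating $A$ is the entire content, and you do not prove it. In Euclidean space it is the divergence theorem applied to $|\nabla u|^{p-2}\nabla u$, but under only doubling and a Poincar\'e inequality there is no Gauss--Green formula of the required strength; invoking Cheeger's structure still leaves you needing a normal trace on $\partial^*A$ for $L^q$ divergence-measure fields together with boundary regularity near $E$, $F$, and $\partial\Om$, none of which is available off the shelf. Your ``vector-field-free alternative'' is not a proof either: minimality of $u$ compares energies of competitor \emph{functions}, not fluxes through competitor \emph{surfaces}, and the coarea identity $\int_0^1\int g_u^{p-1}\,dP(\{u>t\},\cdot)\,dt=\int g_u^p\,d\mu$ gives no lower bound for a \emph{fixed} arbitrary $A$.

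The paper sidesteps the flux estimate entirely. Its upper bound (Proposition~\ref{prop:upper-bound}) never uses the capacitary potential; instead, for each curve $\gamma\in\Gamma$ it builds a Whitney covering of $|\gamma|$ and uses the relative isoperimetric inequality to show that any separating set must carry perimeter $\gtrsim\mu(B)/\rad(B)$ in some Whitney ball $B$, so $\phi=\sum_B\tfrac{\rad(B)}{\mu(B)}\chi_{25\lambda B}$ is admissible for (an exhausting subfamily of) $\mathcal L$. Pairing $\phi$ against the extremal $\rho$ for $\mathcal L$ via Lemma~\ref{lem:phi-and-rho} and summing along $\gamma$ produces $\mathcal M\rho^{1/(p-1)}$ as (a constant times) an admissible function for $\Gamma$, and the $L^p$-boundedness of the maximal operator closes the estimate. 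This combinatorial/isoperimetric route is the substantive idea here and is genuinely different from---and in the metric setting more robust than---the classical Ziemer flux argument you outline.
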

In Euclidean spaces, this was proved (with constant $C=1$)
by Ziemer \cite{Z}, and later
by Aikawa and Ohtsuka who show in \cite{AO} that
the same result	holds for a more general weighted modulus with weights coming from the Muckenhoupt $A_p$-class.
Combining Theorem \ref{thm:duality of moduli} with the characterization of quasiconformal mappings
by means of the moduli of curve families, we get the following theorem.

\begin{theorem}\label{thm:quasiconformality}
Suppose that $X$ and $Y$ are complete Ahlfors $p$-regular metric spaces
supporting a $1$-Poincar\'e inequality.
Suppose $f\colon X\to Y$ is a homeomorphism and there exists $C_0>0$
such that for every collection $\mathcal L$ of surfaces in $X$, 
\[
\Mod_{\frac p{p-1}}(f\mathcal L)\le C_0 \Mod_{\frac p{p-1}}(\mathcal L).
\]
Then $f$ is quasiconformal.
\end{theorem}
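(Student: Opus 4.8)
The plan is to feed the hypothesis into the duality of Theorem~\ref{thm:duality of moduli}, used once in $Y$ and once in $X$, so as to convert the bound on moduli of surface families into a one-sided modulus inequality for curve families, and then to invoke the characterization of quasiconformality by moduli of curve families recalled in the introduction.

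First I would record how $f$ acts on the relevant families. Both $X$ and $Y$ are complete, carry a doubling measure (being Ahlfors $p$-regular), and support a $1$-Poincar\'e inequality, so Theorem~\ref{thm:duality of moduli} is available in each of them. Fix a bounded open set $\Om\subset X$ and two disjoint sets $E,F\subset\Om$ admissible in the sense of the precise statement, Theorem~\ref{thm:duality of moduli in text}; for the present purpose it suffices to run the argument for all pairs of disjoint nondegenerate continua $E,F$, with $\Om$ a large ball containing them (whose radius will be sent to infinity at the end). Let $\Gamma$ be the family of curves joining $E$ and $F$ in $\Om$, and $\mathcal L$ the family of surfaces separating $E$ and $F$ in $\Om$. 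Since $f$ is a homeomorphism, $f(\Om)$ is again bounded and open, $f(E)$ and $f(F)$ are admissible, $f\Gamma$ is exactly the family of curves joining $f(E)$ and $f(F)$ in $f(\Om)$, and $f\mathcal L$ is exactly the family of surfaces separating $f(E)$ and $f(F)$ in $f(\Om)$, separation being preserved by both $f$ and $f^{-1}$.

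Next I would chain three inequalities. Applying Theorem~\ref{thm:duality of moduli} in $Y$ to the pair $(f\Gamma,f\mathcal L)$ gives $\Mod_p(f\Gamma)\ge\Mod_{\frac p{p-1}}(f\mathcal L)^{-(p-1)}$; the hypothesis gives $\Mod_{\frac p{p-1}}(f\mathcal L)^{-(p-1)}\ge C_0^{-(p-1)}\Mod_{\frac p{p-1}}(\mathcal L)^{-(p-1)}$; and Theorem~\ref{thm:duality of moduli} in $X$ applied to $(\Gamma,\mathcal L)$ gives $\Mod_{\frac p{p-1}}(\mathcal L)^{-(p-1)}\ge C^{-p}\Mod_p(\Gamma)$. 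Combining the three, there is $K\ge 1$ depending only on $p$, the two spaces, and $C_0$ with $\Mod_p(\Gamma)\le K\,\Mod_p(f\Gamma)$ for every admissible configuration in $X$. Letting the ball $\Om$ exhaust $X$, so that $f(\Om)$ exhausts $Y$ and, by continuity of modulus along increasing families, both sides converge to the moduli of the full curve families, I obtain $\Mod_p(\Gamma(E,F))\le K\,\Mod_p(\Gamma(f(E),f(F)))$ for all pairs of disjoint nondegenerate continua $E,F\subset X$.

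Finally, $X$ and $Y$ are complete, Ahlfors $p$-regular, and support a $p$-Poincar\'e inequality (a consequence of the $1$-Poincar\'e inequality), hence are $p$-Loewner spaces. In this setting a homeomorphism satisfying $\Mod_p(\Gamma)\le K\,\Mod_p(f\Gamma)$ for all families $\Gamma$ of curves joining continua is quasisymmetric, hence quasiconformal, and the complementary modulus inequality then follows automatically: this is part of the circle of equivalences between the metric, geometric and quasisymmetric definitions of quasiconformality in Loewner spaces, cf.\ Theorem~9.8 in~\cite{HKST} and~\cite{HK,Wil}. Therefore $f$ is quasiconformal. I expect the main difficulty to be the transfer step of the second paragraph --- in particular, ensuring that $f\mathcal L$ is genuinely the full family of separating surfaces of the prescribed type in $Y$ (equivalently, that the $f^{-1}$-image of such a surface in $Y$ is again admissible in $X$), and that generic choices of $E,F,\Om$ satisfy the hypotheses of Theorem~\ref{thm:duality of moduli in text} simultaneously in $X$ and in $Y$ --- since the measure-theoretic notion of ``surface'' interacts only loosely with the purely topological map $f$; this is exactly the point that the precise formulation of Theorem~\ref{thm:duality of moduli in text} should be arranged to handle.
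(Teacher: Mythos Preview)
Your overall chain---lower-bound duality in $Y$, the hypothesis, upper-bound duality in $X$, yielding $\Mod_p(\Gamma)\le K\Mod_p(f\Gamma)$---is exactly the skeleton of the paper's argument. However, there is a genuine gap at the point you yourself flag: the sentence ``$f\mathcal L$ is exactly the family of surfaces separating $f(E)$ and $f(F)$ in $f(\Om)$, separation being preserved by both $f$ and $f^{-1}$'' is not justified and is, in fact, not known to be true. The family $\mathcal L(E,F;\Om)$ is defined using the $p$-thickness sets $b_pE$, $b_pF$ (Definition~\ref{def:thinness}), and there is no reason a mere homeomorphism should send $b_pE$ to $b_p f(E)$. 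The paper poses this as an open question in the remark following its proof. Since the lower-bound inequality of Theorem~\ref{thm:duality of moduli in text} in $Y$ is stated for the family $\mathcal L(f(E),f(F);f(\Om))$, not for the pushforward $f\mathcal L$, your first inequality $\Mod_p(f\Gamma)\ge \Mod_{p/(p-1)}(f\mathcal L)^{-(p-1)}$ is not available as written.

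The paper circumvents this by proving a separate lower bound (Proposition~\ref{prop:lower-bound with compact E and F}) for the \emph{smaller}, purely topological family $\mathcal L^*=\{P(U,\cdot):E\subset\inte U,\ F\subset\ext U\}$, valid when $E,F$ are compact. For this family the identity $f\mathcal L^*=\mathcal L^*(f(E),f(F);f(\Om))$ is immediate from $f$ being a homeomorphism. One then chains: lower bound in $Y$ with $f\mathcal L^*$, the inclusion $\mathcal L^*\subset\mathcal L$ together with the hypothesis, and the upper bound (Proposition~\ref{prop:upper-bound}) in $X$ with $\mathcal L$. A second, smaller difference is that the paper does not invoke the black-box equivalence Theorem~9.8 of \cite{HKST} at the end; instead it chooses specific continua $E,F$ adapted to $x$ and $r$, applies the Loewner lower bound for $\Mod_p(\Gamma)$ in $X$ and a ring-type upper bound for $\Mod_p(f\Gamma)$ in $Y$, and reads off the metric dilatation bound $\limsup_{r\to 0}L_f(x,r)/l_f(x,r)\le H$ directly. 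This makes the argument self-contained and lets the authors relax Ahlfors regularity to the one-sided mass bounds \eqref{eq:mass bounds for X and Y}.
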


This is given, in a somewhat more general form, in Theorem
\ref{thm:quasiconformality in text}.
Results similar to Theorem \ref{thm:duality of moduli}
and Theorem \ref{thm:quasiconformality} were very recently proved
in the metric space setting by Lohvansuu and Rajala \cite{LR}, but their
viewpoint was
somewhat different. In \cite{LR}
(similarly to \cite{Z}) the authors understood a ``surface'' to be
a set of finite codimension one Hausdorff measure separating $E$
and $F$ in a topological sense. By contrast, we understand surfaces to be sets of finite 
perimeter in the spirit of \cite{Kelly} and \cite{JLS}.

Moreover, we wish to study the problem under weaker assumptions:
instead of Ahlfors regularity it is in fact enough to assume
in Theorem \ref{thm:quasiconformality} that the measures
on $X$ and $Y$ are doubling and satisfy suitable one-sided growth bounds.
Additionally, we do not assume the sets $E$ and $F$ to be closed, as was done
in \cite{LR} and \cite{Z}. Working with more general sets makes it a rather
subtle problem to find the correct definition for a ``surface'' that separates $E$ and $F$;
for this we apply the concept of \emph{fine topology},
relying on results proved in \cite{BB,BBL,JB}.
Hence our arguments combine the theory of quasiconformal mappings,
BV theory, and fine potential theory in metric spaces.

\section{Notation and definitions}\label{sec:definitions}

In this section we gather the definitions and assumptions that we need in the paper.

Throughout the paper,
$(X,d,\mu)$ is a complete metric measure space with $\mu$ a Radon measure.
We assume that $X$ consists of at least 2 points.
If a property holds outside a set with $\mu$-measure zero, we say that it holds
almost everywhere, or a.e.

Given $x\in X$ and $r>0$, we denote an open ball by
$B(x,r):=\{y\in X:\,d(y,x)<r\}$. 
Given that in a metric space a ball, as a set, could have more than one
radius and more than one
center, we will consider a ball to be also equipped with a radius and center;
thus two different balls might
correspond to the same set. We then denote
$\rad (B):=r$ as the pre-assigned radius of the ball $B=B(x,r)$,
and $aB:=B(x,ar)$ for $a>0$.

\begin{defn}\label{def:doubling and Ahlfors regularity}
We say that $\mu$ is \emph{doubling} if there exists a constant
$C_d\ge 1$, called the \emph{doubling constant}, such that
\[
0<\mu(2B)\le C_d \mu(B)<\infty
\]
for every ball $B$.

We say that
$(X,d,\mu)$ is \emph{Ahlfors $Q$-regular}, with $Q>0$, if
there is
a constant $C_A\ge 1$ 
such that whenever $x\in X$ and $0<r<\diam(X)$, we have
\[
\frac{r^Q}{C_A}\le \mu(B(x,r))\le C_A\, r^Q.
\]
\end{defn}

Throughout the paper, we always assume $\mu$ to be doubling.

\begin{defn}
Let $A\subset X$. The \emph{codimension 1 Hausdorff measure}
of $A$ is given by 
\[
\mathcal{H}(A):=\lim_{r\to 0^+}\inf\left\{\sum_{k\in I}\frac{\mu(B_k)}{\rad(B_k)}\,\middle\vert\,A\subset\bigcup_{k\in I}B_k
\text{ where }\rad(B_k)\le r\text{ and } I\subset\N\right\}.
\]
\end{defn}

Note that a complete metric space equipped with a doubling measure is always proper,
that is, closed and bounded sets are compact.
Given an open set $\Om\subset X$,
we write $u\in L^1_{\loc}(\Om)$ if $u\in L^1(V)$ for every open
$V\Subset \Om$; this expression means that $\overline{V}$ is a compact
subset of $\Om$.
Other local spaces are defined analogously.

A \emph{curve} is a continuous mapping from a compact interval into
$X$, and a \emph{rectifiable} curve is a curve with finite length.
The length of a rectifiable curve $\gamma$
is denoted by $\ell_{\gamma}$. Every rectifiable curve can be parametrized
by arc-length, see e.g. \cite[Theorem~3.2]{Hj}.
In the following definitions, we let $1\le p<\infty$; in most of the paper
we will assume that $1<p<\infty$.

\begin{defn}\label{def:modulus}
Let $\mathcal{M}$ be a collection of Borel measures on $X$. The admissible class of $\mathcal{M}$, 
denoted $\mathcal{A}(\mathcal{M})$, is the set of all nonnegative Borel functions
$\rho\colon X\to [0,\infty]$ such that
\[ 
\int_X \rho \ d\lambda \ge 1 
\]
for all $\lambda \in \mathcal{M}$. The \emph{$p$-modulus} of the family $\mathcal{M}$ is given by
\[
\Mod_p(\mathcal{M}):=\inf_{\rho \in \mathcal{A}(\mathcal{M})} \int_X \rho^p \ d\mu. 
\]
We say that a nonnegative Borel
function $\rho$ is $p$-\emph{weakly admissible} for the collection $\mathcal M$ if $\rho$ is 
admissible for all but a $p$-modulus zero collection of measures.
\end{defn}

$\Mod_p$ is an outer measure on the class of all Borel measures, see~\cite{F}.
There are two types of collections of measures associated with quasiconformal
mappings. Firstly, given
a collection $\Gamma$ of curves in $X$, we set $\Gamma$ to also denote the  
arc-length measures restricted to each curve in $\Gamma$; then the admissibility
condition is replaced by
\[
\int_{\gamma}\rho\,ds\ge 1
\]
for every rectifiable $\gamma\in\Gamma$, where
\[
\int_{\gamma}\rho\,ds:=\int_0^{\ell_{\gamma}}\rho(\gamma(s))\,ds
\]
for rectifiable $\gamma$.
We  say that a property holds for $p$-almost every curve if it
fails only for a curve family with zero $p$-modulus.
Secondly, for a collection $\mathcal L$ of sets of finite perimeter in a set $\Om$, 
we consider the measures $P(U,\cdot)$
for each $U\in\mathcal L$ (see the definition given later).

\begin{defn}\label{def:N1p}
	Let $\Om\subset X$ be $\mu$-measurable.
Given a function $u\colon \Om\to \overline{\R}$, a Borel function
$g\colon \Om\to[0,\infty]$ is said to be an \emph{upper gradient of $u$ in $\Om$} 
if for every nonconstant rectifiable curve $\gamma$ in $\Om$,
\begin{equation}\label{eq:upper gradient inequality}
|u(x)-u(y)|\le\int_\gamma g\, ds,
\end{equation}
where $x$ and $y$ are the endpoints of $\gamma$.
We interpret $|u(x)-u(y)|=\infty$ whenever either $|u(x)|$ or $|u(y)|$ is infinite.
A function $u$ is said to be in
the \emph{Newton-Sobolev class} $N^{1,p}(\Om)$
if $u\in L^p(\Om)$ and there is an upper gradient $g$ of $u$ in $\Om$ such that
$g\in L^p(\Om)$.
We let
\[
\Vert u\Vert_{N^{1,p}(\Om)}:=\Vert u\Vert_{L^p(\Om)}+\inf \Vert g\Vert_{L^p(\Om)},
\]
where the infimum is taken over upper gradients $g$ of $u$ in $\Om$.
We say that a nonnegative $\mu$-measurable function $g$ is a $p$-weak upper gradient
of a function $u$ in $\Om$ if \eqref{eq:upper gradient inequality} holds
for $p$-almost every curve in $\Om$.
\end{defn}

If $u\in N_{\loc}^{1,p}(\Om)$, then there exists a \emph{minimal} $p$-weak
upper gradient of $u$ in $\Om$, always denoted by $g_u$,
satisfying $g_u\le g$ a.e. in $\Om$ for
every $p$-weak upper gradient $g\in L_{\loc}^p(\Om)$ of $u$ in $\Om$; see
\cite[Theorem 2.25]{BB}
We refer the reader to~\cite{BB,HKST15,S} for more details regarding mappings in
$N^{1,p}_{\loc}(\Om)$.

\begin{defn}
We say that the space $X$ supports a \emph{$p$-Poincar\'e inequality} 
if there exist constants $C_P>0$ and $\lambda\ge 1$ 
such that for all balls $B$ in $X$, all measurable functions $u$ on $X$ and 
all upper gradients $g$ of $u$,
\[
\avgint_{B}|u-u_B| \,d\mu \le C_P\rad(B)\left(\avgint_{\lambda B}g^p \,d\mu\right)^{1/p}.
\]
Here we denote the integral average of $u$ over $B$ by
\[
u_B:=\avgint_{B}u\,d\mu:=\frac{1}{\mu(B)}\int_B u\,d\mu.
\]
\end{defn}

We will assume throughout the paper that $X$ supports a $1$-Poincar\'e inequality.

\begin{defn}\label{def:Loewner space}
For any disjoint sets $E,F\subset X$, we
define $\Gamma(E,F;X)$ to be the collection of curves in $X$ joining
$E$ and $F$.
We say that $X$ is a \emph{Loewner space} if there is a function
$\phi\colon (0,\infty)\to (0,\infty)$ such that
\[
\Mod_p(\Gamma(E,F;X))\ge \phi(t)
\]
whenever $E$ and $F$ are two disjoint, nondegenerate continua (compact connected sets)
such that
\[
t\ge \Delta(E,F):=\frac{\dist(E,F)}{\min\{\diam (E),\diam (F)\}}.
\]
\end{defn}

\begin{defn}
The \emph{$p$-capacity} of a set $A\subset X$ is given by
\[
\capa_p(A):=\inf \Vert u\Vert_{N^{1,p}(X)},
\]
where the infimum is taken over test functions satisfying $u\ge 1$ in $A$.
If a property holds outside a set with $p$-capacity zero, we say that it holds
\emph{p-quasieverywhere}, or $p$-q.e.

We say that a set $V\subset X$ is \emph{p-quasiopen} if for every $\eps>0$ there is
an open set $G\subset X$ such that $\capa_p(G)<\eps$ and
$V\cup G$ is open.

The \emph{relative $p$-capacity} of two sets $A\subset \Om \subset X$ is given by 
\[
\rcapa_p(A,\Om) := \inf \int_X g_u^p \ d\mu
\]
where the infimum is over all  
functions $u\in N^{1,p}(X)$ such that $u\ge 1$ $p$-q.e. in $A$
and $u\le 0$ in $X\setminus \Om$. Recall that $g_u$ denotes the minimal
$p$-weak upper gradient of $u$.
\end{defn}

We know that $\capa_p$ is an outer capacity in the following sense:
\[
\capa_p(A) = \inf \{\capa_p(W) : W \supset A,\ W \ \textrm{is open}\}
\]
for any $A \subset X$, see e.g. \cite[Theorem 5.31]{BB}.

If $\Om\subset X$ is $\mu$-measurable, then 
\begin{equation}\label{eq:quasieverywhere equivalence class}
v=0\ \textrm{ p-q.e. in }\Om\textrm{ implies }\ \Vert v\Vert_{N^{1,p}(\Om)}=0,
\end{equation}
see \cite[Proposition 1.61]{BB}.

From now on, let $1<p<\infty$.

\begin{defn}\label{def:thinness}
A set $E\subset X$ is \emph{$p$-thin} at $x\in X$ if 
\[
\int_0^1 \left(\frac{\rcapa_p(E\cap B(x,t),B(x,2t))}{\rcapa_p(B(x,t),B(x,2t))}\right)^{\frac{1}{p-1}} \frac {dt}{t} <\infty.
\]
If $E$ is not $p$-thin at $x$, we say that it is $p$-thick. We denote the collection
of all points where $E$ is $p$-thick by $b_p E$. If $X\setminus E$ is $p$-thin at each
point $x\in E$, we say that the set $E$ is $p$-\emph{finely open}.
Then the $p$-fine topology on $X$ is the collection of all $p$-finely
open sets.
\end{defn}

\begin{defn}\label{def:condenser}
Given a nonempty open set $\Om$ and two disjoint sets $E,F$, we define the capacity of the \emph{condenser} $(E,F;\Om)$ by
\[
\rcapa_p(E,F;\Om):=\inf\int_{\Om}g_u^p\,d\mu,
\]
where the infimum is taken over all $u\in N^{1,p}(\Om)$ satisfying
$0\le u\le 1$ in $\Om$, $u=1$ in $E\cap \Om$, and $u=0$ in $F\cap \Om$.
\end{defn}

\begin{defn}\label{def:superminimizers}
A function $u\in N_{\loc}^{1,p}(\Om)$ is a $p$-\emph{minimizer} in an open set $\Om\subset X$
if for all $\varphi\in \Lip_c(\Om)$ we have
\[
\int_{\{\varphi\neq 0\}}g_u^p\,d\mu\le \int_{\{\varphi\neq 0\}} g_{u+\varphi}^p\,d\mu.
\]
If the above holds for all nonnegative $\varphi\in \Lip_c(\Om)$, we say that
$u$ is a $p$-\emph{superminimizer}, and if it holds for all nonpositive $\varphi\in \Lip_c(\Om)$, 
we say that $u$ is a $p$-\emph{subminimizer}.
\end{defn}

Next we consider the theory of BV functions in metric spaces.

\begin{defn}
For an open set $\Om\subset X$ and $u \in L^1_{\loc} (\Om)$,
the total variation of $u$ in $\Om$ is given by
\[
\Vert Du\Vert (\Om):=\inf\left\{\liminf_{n\to\infty}\int_\Om g_{u_n}\, d\mu:\,
		(u_n)_{n\in\N}\subset\liploc(\Om),\,u_n\to u\text{ in } L^1_{\loc}(\Om)\right\}.
\]
We say $u\in L^1(\Om)$ is of \emph{bounded variation} on $\Om$, denoted
$u\in \BV(\Om)$, if $\Vert Du\Vert (\Om)<\infty$.
\end{defn}

It is shown in \cite[Theorem 3.4]{M} that $\Vert Du\Vert$ is a Radon measure in $\Om$
for any $u\in \BV_{\loc}(\Om)$. We call 
$\Vert Du\Vert $ the variation measure of $u$. 

\begin{defn}\label{def:finite-perimeter}
A measurable set $U\subset X$ has \emph{finite perimeter} in $\Om$ if
$\Vert D\chi_U\Vert(\Om)<\infty$. We call 
$\Vert D\chi_U\Vert$ the \emph{perimeter measure} of $U$ and we will denote it $P(U,\cdot)$.
\end{defn}

\begin{defn}\label{def:relative isoperimetric inequality}
We say that $X$ supports a \emph{relative isoperimetric inequality} if there exist constants $C_I>0$ and 
$\lambda\ge 1$ such that for all balls $B$ and for all
measurable sets $U$, we have 
\[
\min\{\mu(B\cap U),\,\mu(B\setminus U)\}\le C_I \rad(B) P(U,\lambda B).
\]
\end{defn}

We know that when $\mu$ is doubling and $X$ supports a
$1$-Poincar\'e inequality, then it supports a relative isoperimetric inequality,
see for example \cite[Theorem 3.3]{KoLa} (in a slightly different form,
this was proved earlier in \cite[Theorem 4.3]{A1}).

The noncentered Hardy-Littlewood maximal function of a function
$\rho\in L^1_{\loc}(X)$
is defined by
\begin{equation}\label{eq:definition of maximal function}
\mathcal M \rho(x):=\sup_{B\ni x}\,\avgint_{B}|\rho|\,d\mu,
\end{equation}
where the supremum is taken over all open balls containing $x\in X$.

Finally we give the definition of quasiconformal mappings on metric spaces.
Let $(Y,d_Y,\mu_Y)$ be another metric space equipped with a Radon measure
$\mu_Y$.

\begin{defn}\label{def:dilations}
For a function $f\colon X\to Y$, define for all $x\in X$ and $r>0$
\[ 
L_f(x,r):=\sup_{y\in \overline{B}(x,r)}d_Y(f(x),f(y))
\quad\textrm{and}\quad
l_f (x,r):=\inf_{y\in X\setminus B(x,r)}d_Y(f(x),f(y)).
\]
A homeomorphism $f$ is \emph{(metric) quasiconformal} if
there is a constant $H\ge 1$ such that for all $x\in X$ we have
\[
  \limsup_{r\to 0^+}\frac{L_f(x,r)}{l_f(x,r)}\le H.
\]
A homeomorphism $f$ is \emph{geometric quasiconformal} if
there is a constant $K\ge 1$ such that whenever $\Gamma$ is a family
of curves in $X$, we have
\[
\frac{1}{K}\Mod_p(f\Gamma)\le \Mod_p(\Gamma)\le K \Mod_p(f\Gamma).
\]
\end{defn}

Note that when $f$ is a homeomorphism, we always have
$l_f(x,r)\le L_f(x,r)$.
It is known that when both $X$ and $Y$ are
Ahlfors $p$-regular and support a $p$-Poincar\'e inequality, the two notions
of quasiconformality are equivalent, see Theorem 9.8 in \cite{HKST}.
We will make use of this fact in Section \ref{sec:qc characterization},
but we will give a self-contained proof where we only need somewhat weaker
assumptions than Ahlfors regularity.\\

\noindent {\bf Standing assumptions:} \label{StandAssuption}
Throughout this paper we will assume that $1<p<\infty$ and that
$(X,d,\mu)$ is a complete metric measure space
that supports a $1$-Poincar\'e inequality, such that
$\mu$ is doubling.
We will use the letter $C$ to denote various
nonnegative constants that depend
only on $p$ and the space $X$, and
the value of $C$ could differ at each occurrence.

\section{Background results}

In this section we will gather most of the background 
results needed in the paper. We start with the following coarea formula for BV
functions, which is stated in Remark 4.3 of \cite{M}.

\begin{theorem}\label{thm:coarea}
Suppose $\Om \subset X$ is open and
$u\in \BV(\Om)$. For each $t\in\R$, denote the super-level set of $u$ by
$\{u>t\}:=\{x\in\Om:\,u(x)>t\}$.
Then for every nonnegative Borel function $\phi$ on $\Om$ and every Borel set $A\subset \Om$,
\[
\int_\R\left(\int_A \phi \, dP(\{u>t\},\cdot)\right)dt=\int_A \phi \, d\Vert Du\Vert.
\]
\end{theorem}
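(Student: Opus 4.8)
The plan is to follow De~Giorgi's classical scheme, adapted to the metric setting as in \cite{M,A1}: first prove the \emph{global} identity $\Vert Du\Vert(V)=\int_\R P(\{u>t\},V)\,dt$ for every open $V\subset\Om$, then upgrade it to arbitrary Borel sets $A$ (which is exactly the assertion for $\phi=\chi_A$), and finally to arbitrary nonnegative Borel $\phi$ by linearity and monotone convergence. A preliminary point is the Lebesgue measurability of $t\mapsto P(\{u>t\},V)$ for open $V$: for every bounded open $W\Subset V$ the map $t\mapsto\mu(W\cap\{u>t\})$ is non-increasing, hence has at most countably many discontinuities, and off the countable exceptional set (taken over an exhaustion $W=W_j\nearrow V$) one has $\chi_{\{u>s\}}\to\chi_{\{u>t\}}$ in $L^1_\loc(V)$ as $s\to t$; since $P(\cdot,V)$ is lower semicontinuous under $L^1_\loc(V)$ convergence, $t\mapsto P(\{u>t\},V)$ is lower semicontinuous off a countable set, so measurable.

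For the inequality $\int_\R P(\{u>t\},V)\,dt\le\Vert Du\Vert(V)$ I would take $(u_n)\subset\liploc(V)$ with $u_n\to u$ in $L^1_\loc(V)$ and $\int_V g_{u_n}\,d\mu\to\Vert Du\Vert(V)$. For a locally Lipschitz $w$, the functions $\min\{1,\max\{0,k(w-t)\}\}$ are competitors for $\Vert D\chi_{\{w>t\}}\Vert(V)$ with upper gradient $kg_w\chi_{\{t<w<t+1/k\}}$, so Fatou together with Tonelli and $\int_\R\chi_{\{t<w(x)<t+1/k\}}\,dt=1/k$ gives $\int_\R P(\{w>t\},V)\,dt\le\int_V g_w\,d\mu$. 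Applying this to each $u_n$, extracting a subsequence along which $\chi_{\{u_n>t\}}\to\chi_{\{u>t\}}$ in $L^1_\loc(V)$ for a.e.\ $t$ (possible since $\int_\R\mu(W_j\cap(\{u_n>t\}\triangle\{u>t\}))\,dt=\int_{W_j}|u_n-u|\,d\mu\to0$), and invoking lower semicontinuity of perimeter and Fatou once more, yields the bound. In particular the right-hand side is always finite, as it must be since $u\in\BV(\Om)$.

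The reverse inequality $\Vert Du\Vert(V)\le\int_\R P(\{u>t\},V)\,dt$ is the main obstacle. I would first reduce to bounded $u$: for $u_M:=\min\{\max\{u,-M\},M\}$ one has $\Vert Du_M\Vert(V)\uparrow\Vert Du\Vert(V)$ (truncation does not increase the variation, and lower semicontinuity controls the liminf) while $\int_\R P(\{u_M>t\},V)\,dt=\int_{-M}^M P(\{u>t\},V)\,dt\uparrow\int_\R P(\{u>t\},V)\,dt$, so it suffices to treat $-M\le u\le M$. For such $u$, for $\eps>0$ with $N:=2M/\eps\in\N$ and an offset $a\in[0,\eps)$, set $u^{\eps,a}:=-M+a+\eps\sum_{k=1}^{N}\chi_{\{u>-M+a+k\eps\}}$, so that $\Vert u^{\eps,a}-u\Vert_{L^\infty}\le\eps$ and hence $u^{\eps,a}\to u$ in $L^1_\loc(V)$ as $\eps\to0$. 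Since the total variation is subadditive on finite sums (a sum of upper gradients of approximating Lipschitz functions is an upper gradient of the sum) and a constant contributes nothing, $\Vert Du^{\eps,a}\Vert(V)\le\eps\sum_{k=1}^{N}P(\{u>-M+a+k\eps\},V)$. Averaging in $a$,
\[
\frac1\eps\int_0^\eps\Big(\eps\sum_{k=1}^{N}P(\{u>-M+a+k\eps\},V)\Big)\,da=\int_{-M+\eps}^{M}P(\{u>t\},V)\,dt\le\int_{-M}^{M}P(\{u>t\},V)\,dt,
\]
so there is an offset $a_\eps$ (which may also be chosen, for a.e.\ $a$, so that the finitely many grid points avoid the null set of $t$ at which $\{u>t\}$ fails to have finite perimeter in $V$) for which $\Vert Du^{\eps,a_\eps}\Vert(V)\le\int_{-M}^{M}P(\{u>t\},V)\,dt$. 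Letting $\eps\to0$ and using lower semicontinuity of $\Vert D\,\cdot\,\Vert(V)$ completes this direction. I expect the delicate points to be the subadditivity bookkeeping and this grid-averaging step, which converts the Riemann-type sum into the integral without needing continuity of $t\mapsto P(\{u>t\},V)$; an alternative is a measurable selection of Lipschitz approximants $t\mapsto v_j^t$ with $u_j:=-M+\int_{-M}^M v_j^t\,dt$, which avoids the Riemann-sum issue but requires justifying $g_{u_j}\le\int_{-M}^M g_{v_j^t}\,dt$.

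Finally, with $\Vert Du\Vert(V)=\int_\R P(\{u>t\},V)\,dt$ for all open $V\subset\Om$, I would observe that $\nu(A):=\int_\R P(\{u>t\},A)\,dt$ is a well-defined Borel measure on $\Om$: each $P(\{u>t\},\cdot)$ is a Radon measure by Definition~\ref{def:finite-perimeter} and \cite[Theorem 3.4]{M}, countable additivity passes through the $t$-integral by Tonelli, measurability of $t\mapsto P(\{u>t\},A)$ for general Borel $A$ follows from the open case by a monotone-class argument, and $\nu(\Om)=\Vert Du\Vert(\Om)<\infty$; since $X$ is proper, both $\nu$ and $\Vert Du\Vert$ are finite, outer-regular Borel measures, and two such measures agreeing on open sets agree on all Borel sets. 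Hence $\nu(A)=\Vert Du\Vert(A)$ for every Borel $A\subset\Om$, i.e.\ the theorem holds for $\phi=\chi_A$; linearity extends it to simple $\phi$ and monotone convergence to all nonnegative Borel $\phi$, with measurability of $t\mapsto\int_A\phi\,dP(\{u>t\},\cdot)$ inherited along the way.
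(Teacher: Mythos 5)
Your proof is correct. Note that the paper does not actually prove this statement: it is quoted as a background result from Remark 4.3 of \cite{M}, and what you have written is essentially the standard De Giorgi coarea argument as carried out there — the Cavalieri/Fatou computation for $\int_\R P(\{u>t\},V)\,dt\le\Vert Du\Vert(V)$ on open $V$, the staircase approximation with an averaged offset for the reverse inequality, and the passage from open to Borel sets using that two finite Borel measures on a metric space agreeing on open sets coincide. The only steps you leave implicit are genuinely routine: the chain-rule bound $g_{\psi\circ w}\le |\psi'\circ w|\,g_w$ justifying the upper gradient $kg_w\chi_{\{t<w<t+1/k\}}$ of the truncations (see e.g. \cite[Theorem 2.16]{BB}), and the fact that your $\lambda$-system argument for measurability of $t\mapsto P(\{u>t\},A)$ goes through because $P(\{u>t\},\cdot)$ is a finite measure for a.e.\ $t$ by the first inequality.
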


We have the following ``continuity from below'' for families of measures;
for a proof see Lemma~\ref{lem:mod-continuity} in \cite{Z2}.

\begin{lemma}\label{lem:mod-continuity}
If $\{\mathcal{L}_j\}_{j\in\N}$ is a sequence of families of Borel measures
such that
$\mathcal{L}_j\subset \mathcal{L}_{j+1}$ for each $j$, then
\[
\Mod_p\Big(\bigcup_{j\in\N}\mathcal{L}_j\Big) = \lim_{j\to\infty}\Mod_p(\mathcal{L}_j).
\]
\end{lemma}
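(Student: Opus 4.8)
\section*{Proof proposal for Lemma~\ref{lem:mod-continuity}}

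The plan is to prove the two inequalities separately. One direction is immediate from monotonicity of $\Mod_p$: since $\mathcal{L}_j\subset\bigcup_{k}\mathcal{L}_k$ for every $j$, and $\mathcal{A}(\mathcal{M}')\subset\mathcal{A}(\mathcal{M})$ whenever $\mathcal{M}\subset\mathcal{M}'$, we get $\Mod_p(\mathcal{L}_j)\le\Mod_p(\bigcup_k\mathcal{L}_k)$; as the numbers $\Mod_p(\mathcal{L}_j)$ are nondecreasing in $j$, letting $j\to\infty$ gives $\lim_j\Mod_p(\mathcal{L}_j)\le\Mod_p(\bigcup_k\mathcal{L}_k)$. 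It remains to prove the reverse inequality. Write $M:=\lim_j\Mod_p(\mathcal{L}_j)$. If $M=\infty$ there is nothing to prove, and if $M=0$ then every $\mathcal{L}_j$ has $p$-modulus zero and the claim follows from countable subadditivity of the outer measure $\Mod_p$ (see~\cite{F}); so assume $0<M<\infty$.

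For each $j$ choose $\rho_j\in\mathcal{A}(\mathcal{L}_j)$ with $\int_X\rho_j^p\,d\mu\le\Mod_p(\mathcal{L}_j)+1/j\le M+1/j$. The key observation is that for $j\le k$ the function $\tfrac12(\rho_j+\rho_k)$ is admissible for $\mathcal{L}_j$: any $\lambda\in\mathcal{L}_j$ also lies in $\mathcal{L}_k$, so $\int_X\rho_j\,d\lambda\ge1$ and $\int_X\rho_k\,d\lambda\ge1$. Hence $\int_X\bigl(\tfrac12(\rho_j+\rho_k)\bigr)^p\,d\mu\ge\Mod_p(\mathcal{L}_j)$, which tends to $M$, while $\int_X\rho_j^p\,d\mu$ and $\int_X\rho_k^p\,d\mu$ both tend to a value $\le M$. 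Since $1<p<\infty$, the space $L^p(X,\mu)$ is uniformly convex (Clarkson's inequalities), so having $\tfrac12(\rho_j+\rho_k)$ almost as large in norm as each of $\rho_j$ and $\rho_k$ forces $\Vert\rho_j-\rho_k\Vert_{L^p(X)}\to0$ as $j,k\to\infty$. Thus $(\rho_j)$ is Cauchy in $L^p(X,\mu)$; let $\rho\ge0$ be its strong limit, so that $\int_X\rho^p\,d\mu=\lim_j\int_X\rho_j^p\,d\mu\le M$.

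Finally I would show that $\rho$ is $p$-weakly admissible for $\mathcal{L}:=\bigcup_j\mathcal{L}_j$. By a Fuglede-type lemma, from the strong convergence $\rho_j\to\rho$ in $L^p(X,\mu)$ one passes to a subsequence (not relabeled) along which $\int_X|\rho_j-\rho|\,d\lambda\to0$ for $\Mod_p$-a.e.\ $\lambda$: after thinning so that $\Vert\rho_j-\rho\Vert_{L^p}\le2^{-j}$, the function $g:=\sum_j|\rho_j-\rho|$ lies in $L^p(X,\mu)$, and the family $\{\lambda:\int_X g\,d\lambda=\infty\}$ has $p$-modulus zero since $\eps g$ is admissible for it for every $\eps>0$. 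For $\Mod_p$-a.e.\ $\lambda\in\mathcal{L}$ we then have $\lambda\in\mathcal{L}_k$ for some $k$, hence $\int_X\rho_j\,d\lambda\ge1$ for all $j\ge k$, and letting $j\to\infty$ gives $\int_X\rho\,d\lambda\ge1$. A standard argument — adding to $\rho$ a function of arbitrarily small $L^p$-norm whose integral against each measure in the exceptional family is infinite — upgrades this to $\Mod_p(\mathcal{L})\le\int_X\rho^p\,d\mu\le M$, completing the proof. The only nontrivial inputs are the uniform convexity of $L^p$ and the Fuglede-type lemma; both are classical (the latter already implicit in~\cite{F}), so I expect no real obstacle: the argument is the familiar one for curve-family modulus, transcribed to families of Borel measures.
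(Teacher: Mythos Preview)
Your argument is correct and is precisely the classical uniform-convexity/Fuglede approach to this continuity-from-below result. The paper itself gives no proof here but simply refers to Ziemer~\cite{Z2}, whose argument is essentially the one you have written out, so there is nothing further to compare.
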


By applying Fuglede's and Mazur's lemmas, see e.g. \cite[p.19, p.131]{HKST},
we get the following.

\begin{lemma}\label{lem:existence of minimal rho}
Let $\mathcal L$ be a family of Borel measures with $\Mod_p(\mathcal L)<\infty$.
Then there exists a $p$-weakly admissible function $\rho$ such that
\[
\int_X \rho^p\,d\mu=\Mod_p(\mathcal L).
\]
\end{lemma}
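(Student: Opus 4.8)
The plan is to show existence of a minimizing admissible $\rho$ via the standard Banach space machinery: weak compactness in $L^p$ together with Mazur's lemma to pass from weak to strong convergence, and Fuglede's lemma to preserve admissibility along a suitable almost-everywhere convergent subsequence. So first I would take a minimizing sequence $(\rho_j)_{j\in\N}\subset\mathcal A(\mathcal L)$, i.e. each $\rho_j$ is admissible for every $\lambda\in\mathcal L$ and $\int_X\rho_j^p\,d\mu\to\Mod_p(\mathcal L)$. Since $\Mod_p(\mathcal L)<\infty$, the sequence is bounded in $L^p(X)$, and because $1<p<\infty$ the space $L^p(X)$ is reflexive, so after passing to a subsequence we may assume $\rho_j\wkcvgs\rho$ weakly in $L^p(X)$ for some nonnegative $\rho\in L^p(X)$ (nonnegativity is preserved under weak limits since the nonnegative cone is closed and convex, hence weakly closed).

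Next I would use Mazur's lemma to replace the weakly convergent sequence by a sequence of convex combinations $\widetilde\rho_j=\sum_{k}a_{j,k}\rho_k$ (finite convex combinations with indices $k\ge j$) that converges to $\rho$ strongly in $L^p(X)$. Each $\widetilde\rho_j$ is again admissible for every $\lambda\in\mathcal L$, because admissibility $\int_X\rho\,d\lambda\ge1$ is preserved under convex combinations. By convexity of $t\mapsto t^p$ and strong $L^p$-convergence we get $\int_X\rho^p\,d\mu\le\liminf_j\int_X\widetilde\rho_j^p\,d\mu\le\liminf_j\sum_k a_{j,k}\int_X\rho_k^p\,d\mu=\Mod_p(\mathcal L)$; the reverse inequality $\int_X\rho^p\,d\mu\ge\Mod_p(\mathcal L)$ will follow once we know $\rho$ is $p$-weakly admissible, by the definition of $p$-modulus applied to the weakly admissible class (using that modifying $\rho$ on a $p$-modulus-zero collection does not change the relevant infimum).

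The main obstacle — and the place where Fuglede's lemma enters — is verifying that the strong limit $\rho$ is $p$-weakly admissible for $\mathcal L$, i.e. that $\int_X\rho\,d\lambda\ge1$ for all but a $p$-modulus-zero subfamily of $\mathcal L$. Strong $L^p$-convergence of $\widetilde\rho_j\to\rho$ does not immediately give $\int_X\widetilde\rho_j\,d\lambda\to\int_X\rho\,d\lambda$ for a fixed $\lambda$, but Fuglede's lemma says precisely that from a strongly $L^p$-convergent sequence one can extract a subsequence $(\widetilde\rho_{j_i})$ such that $\int_X|\widetilde\rho_{j_i}-\rho|\,d\lambda\to0$ for $p$-a.e.\ $\lambda\in\mathcal L$ (the exceptional family having $p$-modulus zero). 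For such $\lambda$ we then pass to the limit in $1\le\int_X\widetilde\rho_{j_i}\,d\lambda\to\int_X\rho\,d\lambda$, so $\int_X\rho\,d\lambda\ge1$ for $p$-a.e.\ $\lambda\in\mathcal L$, which is exactly $p$-weak admissibility. Combining this with the energy estimate above yields $\int_X\rho^p\,d\mu=\Mod_p(\mathcal L)$, completing the proof. I would remark that Fuglede's lemma is usually phrased for curve families, but it holds verbatim for arbitrary families of Borel measures with the same proof, which is the generality invoked in the cited references.
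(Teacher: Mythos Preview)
Your proposal is correct and follows exactly the approach the paper indicates: the paper's ``proof'' consists solely of the sentence ``By applying Fuglede's and Mazur's lemmas, see e.g.\ \cite[p.~19, p.~131]{HKST15}, we get the following,'' and you have accurately spelled out what that entails (minimizing sequence, weak compactness in $L^p$, Mazur to upgrade to strong convergence of admissible convex combinations, Fuglede to retain weak admissibility in the limit). Your closing remark that Fuglede's lemma applies to general families of Borel measures, not just curves, is exactly the generality needed here and is consistent with the cited reference.
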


The following lemma is proved in \cite[Lemma~5.2]{LR}.

\begin{lemma}\label{lem:phi-and-rho}
If $\rho$ is a $p/(p-1)$-weakly admissible function for a family of
Borel measures $\mathcal L$ such that 
\[
\Mod_{\frac p{p-1}}(\mathcal L) = \int_X \rho^{\frac{p}{p-1}}\, d\mu
\]
and $\phi$ is another $p/(p-1)$-weakly admissible function for $\mathcal L$, then 
\[
\Mod_{\frac p{p-1}}(\mathcal L) \le \int_X \phi\, \rho^{\frac 1{p-1}}\, d\mu.
\]
\end{lemma}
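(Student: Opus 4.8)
\medskip
\noindent\textbf{Proof proposal.}
Write $q:=\tfrac{p}{p-1}$, so that $1<q<\infty$ and $q-1=\tfrac1{p-1}$; the asserted inequality reads
\[
\Mod_q(\mathcal L)\le\int_X\phi\,\rho^{q-1}\,d\mu .
\]
If the right-hand side is infinite there is nothing to prove, so I would assume $\int_X\phi\,\rho^{q-1}\,d\mu<\infty$; I would also assume $\Mod_q(\mathcal L)<\infty$, which is the situation in our applications and which guarantees $\int_X\rho^q\,d\mu=\Mod_q(\mathcal L)<\infty$. The plan is the classical one: $\rho$ is an extremal for the variational problem defining $\Mod_q(\mathcal L)$ over the convex set of $q$-weakly admissible functions, and the inequality is nothing but its first-order (Euler--Lagrange) optimality condition, tested against the admissible competitor $\phi$.

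For $t\in(0,1)$ put $\rho_t:=(1-t)\rho+t\phi$. First I would check that $\rho_t$ is $q$-weakly admissible for $\mathcal L$: if $\rho$ fails to be admissible only on a subfamily $\mathcal N_1\subset\mathcal L$ with $\Mod_q(\mathcal N_1)=0$, and $\phi$ only on $\mathcal N_2$ with $\Mod_q(\mathcal N_2)=0$, then for every $\lambda\in\mathcal L\setminus(\mathcal N_1\cup\mathcal N_2)$,
\[
\int_X\rho_t\,d\lambda=(1-t)\int_X\rho\,d\lambda+t\int_X\phi\,d\lambda\ge(1-t)+t=1 ,
\]
while $\Mod_q(\mathcal N_1\cup\mathcal N_2)=0$ by subadditivity of $\Mod_q$ (which is an outer measure). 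Hence, by the defining minimality of $\rho$,
\[
\int_X\rho^q\,d\mu=\Mod_q(\mathcal L)\le\int_X\rho_t^q\,d\mu\qquad\text{for all }t\in(0,1).
\]

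It then remains to pass to the limit $t\to0^+$ in $\tfrac1t\bigl(\int_X\rho_t^q\,d\mu-\int_X\rho^q\,d\mu\bigr)\ge0$. Fixing $x\in X$ and writing $a:=\rho(x)$, $b:=\phi(x)$, the map $t\mapsto h(t):=((1-t)a+tb)^q$ is convex and $C^1$ on $[0,1]$ (here $q>1$); consequently its difference quotient $\tfrac1t(h(t)-h(0))$ is nondecreasing in $t$ and decreases, as $t\downarrow0$, to $h'(0)=q\,a^{q-1}(b-a)$, and moreover $\tfrac1t(h(t)-h(0))\le h(1)-h(0)=b^q-a^q$ for every $t\in(0,1)$. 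Assume first, in addition, that $\int_X\phi^q\,d\mu<\infty$. Then $\int_X\rho_t^q\,d\mu\le(1-t)\int_X\rho^q\,d\mu+t\int_X\phi^q\,d\mu<\infty$ by convexity, the difference quotients satisfy $-q\rho^q\le\tfrac1t(\rho_t^q-\rho^q)\le\phi^q-\rho^q\le\phi^q$ pointwise, and hence they are dominated by $q\rho^q+\phi^q\in L^1(\mu)$. Dominated convergence then gives
\[
0\le\lim_{t\to0^+}\frac1t\int_X(\rho_t^q-\rho^q)\,d\mu=\int_X q\,\rho^{q-1}(\phi-\rho)\,d\mu=q\int_X\phi\,\rho^{q-1}\,d\mu-q\int_X\rho^q\,d\mu ,
\]
the splitting of the last integral being justified since $\int_X\rho^q\,d\mu$ and $\int_X\phi\,\rho^{q-1}\,d\mu$ are both finite. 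Rearranging yields $\int_X\phi\,\rho^{q-1}\,d\mu\ge\int_X\rho^q\,d\mu=\Mod_q(\mathcal L)$, which is the claim.

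The remaining point is to discard the auxiliary hypothesis $\int_X\phi^q\,d\mu<\infty$, which genuinely may fail: $\phi$ can be large on a set where $\rho$ vanishes, contributing nothing to $\int_X\phi\,\rho^{q-1}\,d\mu$ but making the $q$-energy of $\phi$, hence of every $\rho_t$, infinite, so that the minimality inequality above becomes vacuous. Here the plan is to approximate $\phi$ from below by $q$-weakly admissible functions $\phi_j$ of finite $q$-energy with $\int_X\phi_j\,\rho^{q-1}\,d\mu\to\int_X\phi\,\rho^{q-1}\,d\mu$, apply the case already settled to each $\phi_j$, and let $j\to\infty$ (the inequality $\Mod_q(\mathcal L)\le\int_X\phi_j\,\rho^{q-1}\,d\mu$ passes to the limit because $\rho$ is a fixed extremal). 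I expect this truncation to be the one genuinely delicate step of the argument: one must lower the large values of $\phi$ enough to make the $q$-energy finite, while preserving $q$-weak admissibility and not spoiling the convergence of $\int_X\phi_j\,\rho^{q-1}\,d\mu$, the obstruction being precisely that a measure in $\mathcal L$ may draw its admissibility mass from the region $\{\rho=0\}$, which is exactly the region one wants to truncate. Everything else is the soft convexity argument above; in the applications of the lemma in this paper $\phi$ always has finite $q$-energy, so the first case is what is actually used.
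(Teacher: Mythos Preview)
The paper does not prove this lemma; it simply cites \cite[Lemma~5.2]{LR}. Your convexity/first-variation argument is the standard one and is almost certainly what that reference does as well: form the convex combination $\rho_t=(1-t)\rho+t\phi$, use weak admissibility of $\rho_t$ to get $\int\rho_t^q\ge\int\rho^q$, and differentiate at $t=0$. Your execution of this in the case $\phi\in L^{q}(\mu)$ is correct.

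Your treatment of the case $\phi\notin L^{q}(\mu)$ is honest but incomplete: you outline a truncation scheme and correctly identify the obstruction (one must cut $\phi$ down on $\{\rho=0\}$ without destroying weak admissibility), but you do not carry it out. Two remarks. First, as you note, the only place the lemma is invoked in this paper is \eqref{eq:admissibility for Gamma i}, where the competitor $\phi_{i,j}$ is a finite sum of characteristic functions of balls and is explicitly observed to lie in $L^{p/(p-1)}$; so the case you have actually proved suffices for every application here. Second, if you want to close the gap in general, one clean route is to avoid dominated convergence altogether: from the pointwise tangent inequality $\rho_t^q\ge\rho^q+qt\rho^{q-1}(\phi-\rho)$ you get $\tfrac1t(\rho_t^q-\rho^q)+q\rho^q\ge q\rho^{q-1}\phi\ge0$, and the left-hand side decreases (in $t$) to $q\rho^{q-1}\phi$; so once you know $\int\rho_t^q<\infty$ for \emph{some} $t\in(0,1)$ you can pass to the limit by monotone convergence. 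Thus the real content of the general case is to exhibit a single weakly admissible competitor of finite $q$-energy lying on the segment between $\rho$ and $\phi$ (or, equivalently, one truncation $\phi_N\le\phi$ that is weakly admissible and has finite $q$-energy). This is indeed the delicate step you flagged.
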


We note that various results that we cite, such as the following
theorem, rely on assuming the space to support
a $p$-Poincar\'e inequality, but this follows via H\"older's inequality
from the $1$-Poincar\'e inequality that is our standing assumption.

\begin{theorem}\label{thm:Loewner}
Suppose that $X$ satisfies the lower mass bound
\[
\mu(B(x,r))\ge cr^p
\]
for all $x\in X$ and $0<r<\diam (X)$, and some constant $c>0$. Then $X$ is a Loewner space.
\end{theorem}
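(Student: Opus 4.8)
The plan is to deduce the theorem from the Loewner criterion of Heinonen and Koskela. First, note that the standing $1$-Poincar\'e inequality self-improves to a $p$-Poincar\'e inequality: for every ball $B$, every measurable $u$, and every upper gradient $g$ of $u$, H\"older's inequality gives
\[
\avgint_B |u-u_B|\,d\mu \le C_P\,\rad(B)\avgint_{\lambda B} g\,d\mu \le C_P\,\rad(B)\Big(\avgint_{\lambda B} g^p\,d\mu\Big)^{1/p}.
\]
Thus $X$ is a complete metric measure space with a doubling measure, a $p$-Poincar\'e inequality, and the lower mass bound $\mu(B(x,r))\ge cr^p$ for $0<r<\diam(X)$ --- exactly the hypotheses under which \cite{HK} (see also \cite{HKST}) shows that $X$ is $p$-Loewner. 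This yields the statement.

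For completeness I outline the argument one would run directly. Let $E,F$ be disjoint nondegenerate continua with $\Delta(E,F)\le t$, and assume $\diam E\le\diam F$, so $\dist(E,F)\le t\,\diam E$; replacing $F$ by a subcontinuum of diameter comparable to $\diam E$ near a nearest point $f\in F$ (this only shrinks the curve family $\Gamma(E,F;X)$ and leaves the relative distance controlled by $t$), we may assume $E\cup F$ lies in a ball $B^*$ of radius comparable to $t\,\diam E$. We must bound $\Mod_p(\Gamma(E,F;X))$ from below; we may assume every admissible $\rho$ lies in $L^p(X)$, else there is nothing to prove. Given such $\rho$, form the $\rho$-potential $u(x):=\min\{1,\inf_\gamma\int_\gamma\rho\,ds\}$, the infimum over rectifiable curves in $X$ joining $E$ to $x$; then $u$ is measurable, $0\le u\le 1$, $u=0$ on $E$, $u=1$ on $F$ (any such curve ending in $F$ belongs to $\Gamma(E,F;X)$), and $\rho$ is an upper gradient of $u$, so $g_u\le\rho$ a.e. Since $1-u\in N^{1,p}(B^*)$ equals $1$ on $E$ and $0$ on $F$, it is a competitor for the condenser capacity, so $\int_X\rho^p\,d\mu\ge\int_{B^*}g_u^p\,d\mu\ge\rcapa_p(E,F;B^*)$. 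It therefore suffices to bound $\rcapa_p(E,F;B^*)$ from below by a positive quantity depending only on $t$ and the data.

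This lower bound rests on two facts. First, the lower mass bound and the $p$-Poincar\'e inequality make concentric balls uniformly capacitarily fat: $\rcapa_p(\overline B(x,r),B(x,2r))$ is comparable to $\mu(B(x,r))/r^p\ge c$. Second --- the crux --- a continuum of diameter at least $r$ through a point $x$ is, up to a data-dependent constant, as capacitarily fat as $\overline B(x,r)$; one proves this by telescoping the $p$-Poincar\'e inequality over the dyadic scales $2^{-k}r$ at a point of the continuum, using that the continuum meets every metric sphere $S(x,s)$, $0<s<r/2$, so that the spherical averages of a test function cannot fall from its value $1$ at $x$ to $0$ near $\partial B(x,\Lambda r)$ without $g$ accumulating a definite amount of $L^p$-energy. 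Combining these with a chain of balls of radius comparable to $\dist(E,F)$ linking the fat continuum $E$ to the fat continuum $F$ produces $\rcapa_p(E,F;B^*)\ge\phi(\Delta(E,F))$ for a decreasing positive $\phi$, as required. The genuinely delicate point --- where completeness, doubling, the Poincar\'e inequality, and the lower mass bound are all used --- is this second fact together with the uniformity of the chaining as the relative distance $t$ grows; the details are those of \cite{HK}.
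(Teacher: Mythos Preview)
Your approach is essentially the same as the paper's: reduce to the Loewner criterion of Heinonen--Koskela~\cite{HK}. The paper's proof is equally brief, citing Theorem~5.7 of~\cite{HK} directly. One point you gloss over that the paper handles explicitly: the formulation in \cite[Theorem~5.7]{HK} carries an additional hypothesis of \emph{$\varphi$-convexity}, which is not literally among the conditions you list. The paper closes this by observing that under the standing assumptions (complete, doubling, Poincar\'e) the space is quasiconvex (see \cite[Theorem~4.32]{BB}), which in turn yields $\varphi$-convexity. Your sketch of the direct capacity argument is a reasonable supplement but not needed for the result.
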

\begin{proof}
See Theorem 5.7 in \cite{HK}. Note that the so-called $\varphi$-convexity assumed
in this theorem holds since under our assumptions the space is \emph{quasiconvex},
meaning that every pair of points can be joined by a curve whose length is at
most a constant number times the distance between the points;
see e.g. \cite[Theorem 4.32]{BB}.
\end{proof}

The space $X$ is \emph{linearly locally connected} in the following sense.

\begin{theorem}\label{thm:linear local connectivity}
Suppose $X$ satisfies the upper mass bound
\[
\mu(B(x,r))\le C_0 r^p
\]
for all $x\in X$ and $r>0$, and a constant $C_0>0$.
Then there exists a constant $C_1\ge 1$
such that
for every ball $B=B(x,r)$, any pair of points in
$B\setminus \tfrac 12 B$
can be joined by a curve in $B(x,C_1 r)\setminus B(x,r/C_1)$.
\end{theorem}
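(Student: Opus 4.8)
The plan is to handle the two requirements on the joining curve — that it stay inside $B(x,C_1r)$ and outside $B(x,r/C_1)$ — by completely different means. Fix $B=B(x,r)$ and $y,z\in B\setminus\tfrac12 B$, so $r/2\le d(x,y),d(x,z)<r$. Containment in a dilate is essentially free: by \cite[Theorem~4.32]{BB} the space is $L$-quasiconvex, so $y$ and $z$ are joined by a curve of length at most $L\,d(y,z)\le 2Lr$, which then lies in $B(x,(1+2L)r)$. So the real content is to produce a curve joining $y$ and $z$ that \emph{also} avoids some definite central ball $B(x,r/C_1)$; one may re-enlarge $C_1$ afterwards to absorb the factor $1+2L$.

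The role of the upper mass bound is to make curves through a small central ball $p$-negligible. For $0<s<r/4$ let $\Gamma_s$ be the family of curves meeting both $\overline B(x,s)$ and $X\setminus B(x,r/4)$, and test it with the logarithmic function $\rho_s:=(\log(r/(4s)))^{-1}d(x,\cdot)^{-1}\chi_{B(x,r/4)\setminus\overline B(x,s)}$. Since $d(x,\cdot)$ is $1$-Lipschitz, the total change of $\log d(x,\cdot)$ along a sub-arc of any $\gamma\in\Gamma_s$ running from $\overline B(x,s)$ to $\partial B(x,r/4)$ is at least $\log(r/(4s))$, so $\int_\gamma\rho_s\,ds\ge1$ and $\rho_s$ is admissible. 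Splitting $B(x,r/4)\setminus\overline B(x,s)$ into dyadic annuli, on which $d(x,\cdot)^{-p}\le(2^ks)^{-p}$ while $\mu(B(x,2^{k+1}s))\le C_0(2^{k+1}s)^p$, each annulus contributes at most a constant and there are $\asymp\log(r/s)$ of them, so
\[
\Mod_p(\Gamma_s)\le\int_X\rho_s^p\,d\mu\le\frac{C}{(\log(r/(4s)))^{p-1}},
\]
which tends to $0$ as $s\to0^+$.

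It then remains to beat this with a lower bound for a competing family. If $d(y,z)\le\eps_0 r$ for a small fixed $\eps_0=\eps_0(L)$, the short quasiconvex curve from $y$ to $z$ stays far from $x$ and the claim is immediate; so assume $d(y,z)>\eps_0 r$. Then $B_y:=\overline B(y,\eps_0 r/8)$ and $B_z:=\overline B(z,\eps_0 r/8)$ are disjoint nondegenerate continua contained in $X\setminus B(x,r/4)$, and the family $\Sigma$ of curves joining $B_y$ to $B_z$ inside $B(x,(1+2L)r)$ satisfies $\Mod_p(\Sigma)\ge\eta_0>0$ for some $\eta_0$ depending only on the data and $\eps_0$. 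Choose $s=r/C_1$ with $C_1$ so large that $\Mod_p(\Gamma_s)<\eta_0$. Every curve of $\Sigma$ that meets $\overline B(x,s)$ belongs to $\Gamma_s$, so — $\Mod_p$ being an outer measure — the sub-family of $\Sigma$ avoiding $\overline B(x,s)$ has modulus at least $\eta_0-\Mod_p(\Gamma_s)>0$, hence is nonempty; pick such a curve, join its endpoints to $y$ and $z$ by short quasiconvex arcs inside $B_y$, $B_z$ (which also avoid $B(x,r/4)$), and obtain a curve from $y$ to $z$ in $B(x,(1+2L)r)\setminus\overline B(x,r/C_1)$. A routine localization of quasiconvexity then shrinks the outer ball back to a fixed dilate of $B$, and after one final adjustment of $C_1$ the theorem follows.

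The genuine obstacle is the lower bound $\Mod_p(\Sigma)\ge\eta_0$. When $X$ is Ahlfors $p$-regular it is immediate from the Loewner property (Theorem~\ref{thm:Loewner}), since $\Delta(B_y,B_z)$ is then controlled by an absolute constant; but under only the one-sided bound $\mu(B(x,\rho))\le C_0\rho^p$ there is no scale-invariant \emph{lower} mass bound, so Loewner is not directly available. One must instead use the fact that a complete doubling $1$-Poincar\'e space obeying that bound is already linearly locally connected — which can be quoted from \cite{HK} — or establish a scale-invariant lower bound for the condenser capacity $\rcapa_p(B_y,B_z;B(x,(1+2L)r))$ directly from the interplay of the Poincar\'e inequality and the upper mass bound, and then compare this capacity with the modulus of $\Sigma$. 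I would build the final write-up around whichever of these two routes turns out shorter.
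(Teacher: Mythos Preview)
The paper does not give a self-contained proof of this theorem: its entire argument is ``See Remark~3.19 in~\cite{HK}; note that there it is also assumed that the space is of Hausdorff dimension~$p$, but this is not needed in the proof.'' So there is no detailed proof in the paper to compare your construction against; the relevant comparison is between your sketch and the argument underlying~\cite{HK}.

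Your modulus--comparison strategy is exactly the mechanism behind the Heinonen--Koskela result, and your upper bound $\Mod_p(\Gamma_s)\le C(\log(r/(4s)))^{1-p}$ from the one-sided mass bound is correct and is the place where that hypothesis enters. You also correctly isolate the genuine difficulty: a uniform lower bound $\Mod_p(\Sigma)\ge\eta_0$. However, your two proposed resolutions do not close the gap. The first --- ``quote from~\cite{HK} that a complete doubling $1$-Poincar\'e space with this bound is already linearly locally connected'' --- \emph{is} the statement you are trying to prove, so invoking it collapses your whole argument to the paper's one-line citation. The second --- deriving a scale-invariant lower bound for $\rcapa_p(B_y,B_z;B(x,(1+2L)r))$ ``from the interplay of the Poincar\'e inequality and the upper mass bound'' --- is not carried out, and as stated it is suspect: the Poincar\'e inequality (applied to a competitor $u$ with $u=1$ on $B_y$ and $u=0$ on $B_z$, using doubling to control the measure ratios) yields only
\[
\rcapa_p(B_y,B_z;B(x,(1+2L)r))\ \gtrsim\ \frac{\mu(B(x,r))}{r^{p}},
\]
and the hypothesis $\mu(B(x,r))\le C_0 r^p$ bounds this quantity from \emph{above}, not from below. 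Without a lower mass bound there is no reason for $\mu(B(x,r))/r^p$ to be bounded away from $0$ uniformly in $x$ and $r$, so the comparison $\Mod_p(\Gamma_s)<\Mod_p(\Sigma)$ does not produce a uniform $C_1$ along this route. If you want a self-contained proof rather than the paper's citation, you need to extract the actual argument from~\cite{HK} (which is set up under a Loewner hypothesis and then relaxed in Remark~3.19); your sketch, as it stands, has not done so.
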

\begin{proof}
See Remark 3.19 in \cite{HK}; note that there it is
also assumed that the space is of Hausdorff dimension $p$, but this is not needed
in the proof.
\end{proof}

Finally we give a few results concerning superminimizers;
recall Definition \ref{def:superminimizers}.
Let $W\subset X$ be an open set.
We define the \emph{lsc-regularization} (lower semicontinuous regularization) of
a function $u$ on $W$ by
\[
u^*(x):=\lim_{r\to 0} \essinf_{B(x,r)} u,\quad x\in W.
\]

The following proposition is given as part of Theorem 8.22 in \cite{BB}.

\begin{proposition}\label{prop: facts about lsc regularization}
If $u$ is a $p$-superminimizer in $W$, then $u^*$ is lower
semicontinuous in $W$ and $u=u^*$ $p$-q.e. in $W$.
\end{proposition}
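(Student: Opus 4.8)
The plan is to prove the two assertions separately: the lower semicontinuity of $u^*$ is soft and holds for any $L^1_{\loc}$ function, while the identification $u=u^*$ $p$-q.e.\ will rest on the weak Harnack inequality for $p$-superminimizers together with the Lebesgue point theorem for Newtonian functions. For the lower semicontinuity I would first note that $r\mapsto\essinf_{B(x,r)}u$ is nonincreasing, so $u^*(x)=\sup_{r>0}\essinf_{B(x,r)}u$ and the limit defining $u^*$ always exists in $[-\infty,\infty]$. Then, given $x\in W$ and $c<u^*(x)$, I would pick $r>0$ with $B(x,r)\subset W$ and $\essinf_{B(x,r)}u>c$; for any sequence $x_j\to x$ in $W$ one has $B(x_j,r/2)\subset B(x,r)$ for all large $j$, whence $u^*(x_j)\ge\essinf_{B(x_j,r/2)}u\ge\essinf_{B(x,r)}u>c$, so $\liminf_{j\to\infty}u^*(x_j)\ge u^*(x)$, which is the lower semicontinuity of $u^*$ on $W$.

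For $u=u^*$ $p$-q.e., the key ingredient is the weak Harnack inequality: there are constants $C\ge 1$ and $q_0>0$, depending only on $p$ and $X$, such that every nonnegative $p$-superminimizer $v$ in a neighbourhood of a ball $2B$ satisfies
\[
\Big(\avgint_{2B}v^{q_0}\,d\mu\Big)^{1/q_0}\le C\essinf_{B}v .
\]
This is obtained by the De Giorgi--Moser iteration adapted to the metric setting (via the Caccioppoli inequality, the $p$-Poincar\'e inequality implied by our standing $1$-Poincar\'e assumption, and the doubling property). Granting it, I would fix $x\in W$ and reduce to the case $-\infty<u^*(x)<\infty$ with $u\ge 0$ on some ball $B(x,2r_0)\subset W$ (the case $u^*(x)=+\infty$ is immediate since then $\essinf_{B(x,r)}u\to\infty$; the set $\{u^*=-\infty\}$ is $p$-null, a standard fact about superminimizers; and then a truncation $u\mapsto\max(u,-k)+k$ followed by $k\to\infty$ handles the sign, using that $\max$ of a superminimizer with a constant is again a superminimizer). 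For $0<r<r_0$ put $m_r:=\essinf_{B(x,2r)}u$ and $M_r:=\essinf_{B(x,r)}u$; then $u-m_r\ge 0$ is a $p$-superminimizer in $B(x,2r)$, and the weak Harnack inequality gives
\[
\avgint_{B(x,2r)}(u-m_r)^{q_0}\,d\mu\le C^{q_0}(M_r-m_r)^{q_0}.
\]
Since both $m_r$ and $M_r$ increase to $u^*(x)$ as $r\to 0^+$, the right-hand side tends to $0$; combining Chebyshev's inequality applied to $u-m_r$ with the a.e.\ bound $u\ge m_r$ on $B(x,2r)$, this forces $\mu(\{y\in B(x,2r):|u(y)-u^*(x)|>\delta\})/\mu(B(x,2r))\to 0$ for every $\delta>0$, that is, $u$ has essential limit $u^*(x)$ at every point $x\in W$.

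Finally, I would invoke that every Newtonian function is $p$-quasicontinuous under our assumptions, and that for a $p$-quasicontinuous $u\in N^{1,p}_{\loc}(W)$ one has $\lim_{r\to 0}\avgint_{B(x,r)}|u-u(x)|\,d\mu=0$ at $p$-q.e.\ point $x$ (the Lebesgue point theorem for Newtonian functions). At such a point the essential limit of $u$ equals $u(x)$, but by the previous paragraph it also equals $u^*(x)$, so $u=u^*$ outside a set of $p$-capacity zero. The hard part is the weak Harnack inequality itself, which carries the entire De Giorgi--Moser machinery; by comparison the lower semicontinuity is elementary and the Lebesgue point theorem is citable, although the truncation reduction and the claim that $\{u^*=-\infty\}$ is $p$-null also need a little care.
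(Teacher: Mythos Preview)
The paper does not actually prove this proposition: it is stated as a citation of \cite[Theorem~8.22]{BB}, with the remark that the $p$-q.e.\ identity $u=u^*$ is contained in the \emph{proof} of that theorem. Your argument is correct and is essentially the standard one found in that reference---lower semicontinuity from the monotone definition of $u^*$, and the identification $u=u^*$ $p$-q.e.\ via the weak Harnack inequality combined with the Lebesgue point property of (quasicontinuous) Newtonian functions; minor technical points such as ensuring the nonnegativity of $u-m_r$ on a slightly larger ball, or the treatment of $\{u^*=-\infty\}$, are routine to fix and do not affect the scheme.
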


More precisely, the fact that $u=u^*$ $p$-q.e. in $W$ is given in the \emph{proof}
of \cite[Theorem 8.22]{BB}. By \eqref{eq:quasieverywhere equivalence class}
we know that $u^*$ is still a $p$-superminimizer.

It is a well known fact that \emph{superharmonic functions} are \emph{finely continuous};
this was shown in the metric space setting in \cite{JB} and \cite{Korte}.
Here we record this result in the following theorem, which follows by combining
Proposition 7.12, Theorem 9.24(a,c), and Theorem 11.38 of \cite{BB}.
\begin{theorem}\label{thm:fine continuity}
Let $u$ be a $p$-superminimizer in $W$. Then $u^*$ is continuous with respect to
the $p$-fine topology in $W$.
\end{theorem}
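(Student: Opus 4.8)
The plan is to reduce the statement to two facts from Björn--Björn's potential theory \cite{BB}: that the lsc-regularization of a $p$-superminimizer is $p$-superharmonic, and that every $p$-superharmonic function is finely continuous. Once these are in hand there is essentially nothing left to do.

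\emph{Step 1: $u^*$ is $p$-superharmonic in $W$.} By Proposition \ref{prop: facts about lsc regularization} and the remark following it (which uses \eqref{eq:quasieverywhere equivalence class}), $u^*$ is a lower semicontinuous $p$-superminimizer in $W$ with $u = u^*$ $p$-q.e.\ in $W$. Theorem 9.24(a) of \cite{BB} states that the lsc-regularization of a $p$-superminimizer on an open set is $p$-superharmonic there; applied to $u$, it gives that $u^*$ is $p$-superharmonic in $W$. Here one uses implicitly that a $p$-superminimizer lies in $N^{1,p}_{\loc}$, hence is finite $\mu$-a.e.\ and, by the local regularity theory, locally bounded below, so that $u^*$ qualifies as a superharmonic function in the sense of \cite{BB}.

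\emph{Step 2: fine continuity.} It is a fundamental result of fine potential theory, proved in the metric setting in \cite{JB} and \cite{Korte} and recorded as Theorem 11.38 of \cite{BB}, that every $p$-superharmonic function on an open set is continuous with respect to the $p$-fine topology; Proposition 7.12 and Theorem 9.24(c) of \cite{BB} supply the bridging facts, namely that a $p$-superharmonic function is finite $p$-q.e.\ and coincides $p$-q.e.\ with a Newtonian function, which in turn is finely continuous off a set of $p$-capacity zero. Applying this to the function $u^*$ produced in Step 1 yields that $u^*$ is continuous with respect to the $p$-fine topology in $W$, which is the assertion.

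\emph{On the main point of care.} There is no genuine obstacle: the whole force of the statement is carried by Theorem 11.38 of \cite{BB} (equivalently \cite{JB,Korte}), whose proof relies on Cartan-type properties of $p$-thin sets in metric measure spaces supporting a Poincar\'e inequality, and which we treat as a black box. The only subtlety is that one must pass to the lsc-regularization \emph{before} invoking fine continuity: a $p$-superminimizer $u$ itself need not be finely continuous, since $u$ and $u^*$ may differ on a nonempty set of $p$-capacity zero, and it is precisely the representative $u^*$ that the theorem concerns.
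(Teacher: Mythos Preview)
Your proposal is correct and follows exactly the route the paper indicates: the paper does not give an explicit proof but records that the result follows by combining Proposition~7.12, Theorem~9.24(a,c), and Theorem~11.38 of \cite{BB}, which is precisely the chain of citations you invoke.
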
	

\section{Proof of Theorem \ref{thm:duality of moduli}}

We will consider the following families of curves and surfaces; recall the concept
of capacitary thinness from Definition \ref{def:thinness}.

\begin{defn}
For an open set $\Om\subset X$ and any disjoint sets $E,F\subset X$, we
define $\Gamma(E,F;\Omega)$ to be the collection of curves in $\Omega$ joining
$E\cap\Om$ and $F\cap\Om$.
We also  define the collection of measures

\begin{align*}
&\mathcal L(E,F;\Omega):=
\{P(U,\Om\cap\cdot):\,
U\subset X\textrm{ is }\mu\textrm{-measurable with }\\
&\qquad\qquad\qquad\qquad\quad b_p E\cap \Om\subset \inte(U) \text{ and } b_p F\cap \Om\subset \ext (U)\}.
\end{align*}
\end{defn}
By an abuse of terminology,
we will also talk about the sets $U$ belonging to
$\mathcal L(E,F;\Omega)$.
Essentially, the boundaries of $U$ are ``surfaces'' that ``separate'' $E$ and $F$ in $\Om$,
but since we do not assume $E$ and $F$ to necessarily be compact subsets of $\Om$,
the choice of the correct definition for $\mathcal L(E,F;\Omega)$ becomes rather subtle.
If one would employ the usual definition where the surfaces need to stay at a strictly
positive distance from $E$ and $F$, it would be difficult to prove the
lower bound of Theorem \ref{thm:duality of moduli}. On the other hand,
if one allows the surfaces to ``touch'' $E$ and $F$ significantly, then it becomes
difficult to prove the upper bound. For this reason, we allow the surfaces to
``touch'' $E$ and $F$ only at capacitary thinness points.

Throughout this section, we will abbreviate
$\mathcal L = \mathcal L(E,F;\Omega)$ and
$\Gamma = \Gamma (E,F;\Omega)$.
We begin by proving the lower bound.

\begin{proposition}\label{prop:lower-bound}
  Let $\Omega\subset X$ be nonempty, open and bounded
 and let $E,F\subset X$
 with $\overline{E}\cap \overline{F}=\emptyset$.
 Then $\Mod_p(\Gamma)< \infty$, and if also $\Mod_{\frac p{p-1}}(\mathcal{L})<\infty$,
 then
\[
1 \le \Mod_{\frac p{p-1}}(\mathcal{L})^{\frac{p-1}p}\Mod_p(\Gamma)^{\frac 1p}.
\]
\end{proposition}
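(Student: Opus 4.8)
The plan is to establish the lower bound by a ``testing'' argument: take a weakly admissible function $\rho$ for the surface family $\mathcal L$ and use it to build an admissible function for the curve family $\Gamma$, then apply Hölder's inequality. First I would note that $\Mod_p(\Gamma)<\infty$ follows easily: since $\overline E\cap\overline F=\emptyset$ and $\Om$ is bounded, $\dist(\overline E,\overline F)=:\delta>0$, and the constant function $\rho_0=\delta^{-1}\chi_\Om$ is admissible for $\Gamma$ because every curve joining $E\cap\Om$ and $F\cap\Om$ inside $\Om$ has length at least $\delta$; hence $\Mod_p(\Gamma)\le\delta^{-p}\mu(\Om)<\infty$. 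So assume also $\Mod_{\frac p{p-1}}(\mathcal L)<\infty$. By Lemma \ref{lem:existence of minimal rho} applied with exponent $\frac p{p-1}$, pick a $\frac p{p-1}$-weakly admissible $\rho$ for $\mathcal L$ with $\int_X\rho^{\frac p{p-1}}\,d\mu=\Mod_{\frac p{p-1}}(\mathcal L)$.

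The key step is to produce, from $\rho$, a function $u$ on $\Om$ whose upper gradient is controlled by $\rho$ (up to a $p$-negligible curve family) and which equals $1$ near $b_pE\cap\Om$ and $0$ near $b_pF\cap\Om$, so that $\rho$ becomes (weakly) admissible for $\Gamma$. The natural candidate is
\[
u(x):=\inf_\gamma \int_\gamma \rho\,ds,
\]
where the infimum runs over rectifiable curves in $\Om$ from $x$ to $F\cap\Om$ (capped at $1$), a standard construction: $u$ is then $1$-Lipschitz along $p$-a.e.\ curve with $\rho$ as an upper gradient, $u=0$ on $F\cap\Om$, and the point is to show $u\ge 1$ $p$-q.e.\ on $b_pE\cap\Om$ (equivalently $u$ should be $\ge1$ at the fine-thickness points of $E$). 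For this I would argue by contradiction using the coarea formula (Theorem \ref{thm:coarea}) applied to $v:=1-u$: if $u<1$ on a set of positive capacity meeting $b_pE\cap\Om$, then for a.e.\ level $t\in(0,1)$ the superlevel set $U_t=\{v>t\}=\{u<1-t\}$ has finite perimeter in $\Om$, contains (the bulk of) $b_pE\cap\Om$ in its interior, and stays away from $b_pF\cap\Om$ — so $P(U_t,\Om\cap\cdot)\in\mathcal L$ — while the coarea formula bounds $\int_0^1 P(U_t,\Om)\,dt$ by $\|Dv\|(\Om)$, hence by $\int_\Om\rho\,d\mu<\infty$ via the upper gradient property of $\rho$ for $v$. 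Combined with $\int_{U_t}\rho^{\frac1{p-1}}\cdots$ and the weak admissibility of $\rho$ for $\mathcal L$, one forces $\int_X\rho^{\frac p{p-1}}\,d\mu\ge$ something positive, and a more careful bookkeeping (this is essentially where Lemma \ref{lem:phi-and-rho} or a direct Hölder estimate enters) shows the admissibility quantity $\int_\gamma\rho\,ds\ge1$ along $p$-a.e.\ $\gamma\in\Gamma$. Granting that $\rho$ is $p$-weakly admissible for $\Gamma$, we get $\Mod_p(\Gamma)\le\int_X\rho^p\,d\mu$... but the exponents don't match, so instead the clean finish is: $\rho$ admissible for $\Gamma$ gives, by Hölder with exponents $p$ and $\frac p{p-1}$ applied to $\rho=\rho\cdot\chi_{\{\rho>0\}}$, the inequality $1\le\int_\gamma\rho\,ds$, and then taking $\rho$ itself as the competitor in $\Mod_p(\Gamma)$ is the wrong normalization — rather, one checks directly that $1\le\Mod_{\frac p{p-1}}(\mathcal L)^{\frac{p-1}p}\Mod_p(\Gamma)^{\frac1p}$ by noting $\rho$ is simultaneously a competitor (after the above) for admissibility of both families and applying the two-exponent Hölder inequality to $\int_X\rho\cdot\rho^{\frac1{p-1}}$...

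The honest statement of the finish: having shown $\rho$ is $p$-weakly admissible for $\Gamma$, we have $\Mod_p(\Gamma)\le\int_X\rho^p\,d\mu$ only if $\rho\in L^p$, which need not hold; so instead I would normalize differently. The correct argument, mirroring Ziemer and Lohvansuu--Rajala: let $\rho$ be the minimizer for $\mathcal L$ as above and let $\sigma$ be an arbitrary admissible function for $\Gamma$; the construction shows that $\sigma$ (suitably truncated) is also ``usable'' for $\mathcal L$, and Lemma \ref{lem:phi-and-rho} gives $\Mod_{\frac p{p-1}}(\mathcal L)\le\int_X\sigma\,\rho^{\frac1{p-1}}\,d\mu\le\big(\int_X\sigma^p\,d\mu\big)^{1/p}\big(\int_X\rho^{\frac p{p-1}}\,d\mu\big)^{\frac{p-1}p}=\big(\int_X\sigma^p\,d\mu\big)^{1/p}\,\Mod_{\frac p{p-1}}(\mathcal L)^{\frac{p-1}p}$ by Hölder; dividing by $\Mod_{\frac p{p-1}}(\mathcal L)^{\frac{p-1}p}$ and taking the infimum over $\sigma$ yields $\Mod_{\frac p{p-1}}(\mathcal L)^{1/p}\le\Mod_p(\Gamma)^{1/p}$, i.e.\ $1\le\Mod_{\frac p{p-1}}(\mathcal L)^{\frac{p-1}p}\Mod_p(\Gamma)^{\frac1p}$. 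So the real content is: \emph{every admissible $\sigma$ for $\Gamma$, via the distance-function-type construction above, yields sets $\{x:\int_\gamma\sigma\,ds<t\}$ that belong to $\mathcal L$ for a.e.\ $t\in(0,1)$, so that $\sigma$ is weakly admissible for $\mathcal L$ in the averaged sense needed for Lemma \ref{lem:phi-and-rho}.}

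The main obstacle I anticipate is precisely the verification that the superlevel sets $U_t$ of the function $u(x)=\min\{1,\inf_\gamma\int_\gamma\sigma\,ds\}$ genuinely lie in $\mathcal L(E,F;\Om)$ — that is, that $b_pE\cap\Om\subset\inte(U_t)$ and $b_pF\cap\Om\subset\ext(U_t)$ — since $u$ is only defined up to sets of capacity zero and its superlevel sets are only defined up to measure zero, whereas ``interior'' and ``exterior'' are pointwise topological notions. This is exactly the point where the fine topology enters: one must pass to the lsc-regularization $u^*$ (or a finely continuous representative), use that $u^*$ is finely continuous (Theorem \ref{thm:fine continuity}, after identifying $u$ with a superminimizer or arguing directly that $\inf_\gamma\int_\gamma\sigma$ has finely continuous truncations), and then at a point $x\in b_pE\cap\Om$ — where $X\setminus E$ is $p$-thin, hence $x$ is a fine interior point of $E$ — combine fine continuity of $u^*$ with $u^*=0$ on $F$ and the fine openness to conclude $x$ lies in the topological interior of $\{u^*>t\}$ up to adjusting $U_t$ on a $\mu$-null set. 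Handling the measure-zero/capacity-zero discrepancies between $\{u^*>t\}$ and a genuinely open-modulo-null representative, and checking the coarea/perimeter bookkeeping survives this adjustment, is the delicate part; everything else (the distance-function construction, Fuglede's lemma to get $\sigma$ as a weak upper gradient, the two Hölder applications) is routine.
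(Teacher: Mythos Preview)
Your proposal correctly identifies the main obstacle but does not overcome it, and the paper's proof takes a crucially different route precisely at that point.

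The gap is this: for an \emph{arbitrary} admissible function $\sigma$ for $\Gamma$, the function $u(x)=\min\{1,\inf_\gamma\int_\gamma\sigma\,ds\}$ has $\sigma$ as an upper gradient and satisfies $u=0$ on $F\cap\Om$ and $u=1$ on $E\cap\Om$, but there is no reason for $u$ to be a $p$-superminimizer anywhere, and hence no reason for any representative of $u$ to be $p$-finely continuous. Theorem~\ref{thm:fine continuity} applies only to superminimizers; your parenthetical ``or arguing directly that $\inf_\gamma\int_\gamma\sigma$ has finely continuous truncations'' is not a known fact and is precisely the missing ingredient. Without fine continuity at points of $b_pE\cap\Om$, you cannot conclude that $b_pE\cap\Om\subset\inte(U_t)$ in the \emph{topological} sense required by the definition of $\mathcal L$, and the argument stalls exactly where you say it does.

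The paper resolves this by taking $u$ to be the \emph{capacitary potential} for the condenser $(E,F;\Om)$: the minimizer of $\int_\Om g_u^p\,d\mu$ among functions in $N^{1,p}(\Om)$ equal to $1$ on $E\cap\Om$ and $0$ on $F\cap\Om$. Because $u$ is a genuine minimizer, it is a $p$-superminimizer in an open neighborhood of $\overline E\cap\Om$ and a $p$-subminimizer in an open neighborhood of $\overline F\cap\Om$; then Theorem~\ref{thm:fine continuity} applies to its lsc/usc-regularization $u^*$, giving $u^*=1$ on $b_pE\cap\Om$ and $u^*=0$ on $b_pF\cap\Om$. Lower/upper semicontinuity in those neighborhoods makes the superlevel sets $U_t=\{u^*>t\}$ genuinely open near $b_pE$ and their complements genuinely open near $b_pF$, so $U_t\in\mathcal L$ for every $t\in(0,1)$. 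Since for this particular $u$ one has $\int_\Om g_u^p\,d\mu=\rcapa_p(E,F;\Om)=\Mod_p(\Gamma)$, the finish is a single direct computation: for any $\rho$ admissible for $\mathcal L$,
\[
1\le\int_0^1\!\!\int_\Om\rho\,dP(U_t,\cdot)\,dt=\int_\Om\rho\,d\|Du\|\le\int_\Om\rho\,g_u\,d\mu\le\Big(\int_\Om\rho^{\frac p{p-1}}\,d\mu\Big)^{\frac{p-1}p}\Big(\int_\Om g_u^p\,d\mu\Big)^{\frac1p},
\]
and infimizing over $\rho$ gives the claim. No distance-function construction and no appeal to Lemma~\ref{lem:phi-and-rho} are needed.

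A secondary issue: your final arithmetic via Lemma~\ref{lem:phi-and-rho} is also incorrect. From $\Mod_{p/(p-1)}(\mathcal L)\le(\int_X\sigma^p\,d\mu)^{1/p}\Mod_{p/(p-1)}(\mathcal L)^{(p-1)/p}$ you obtain, after infimizing over $\sigma$, only $\Mod_{p/(p-1)}(\mathcal L)\le\Mod_p(\Gamma)$, and this does \emph{not} imply $1\le\Mod_{p/(p-1)}(\mathcal L)^{(p-1)/p}\Mod_p(\Gamma)^{1/p}$ unless you already know $\Mod_{p/(p-1)}(\mathcal L)\ge1$, which is not given.
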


\begin{proof}
	Since $\overline{E}\cap \overline{\Om}$ and
	$\overline{F}\cap \overline{\Om}$ are two disjoint compact sets,
	we have $d:=\dist(\overline{E}\cap \overline{\Om},\overline{F}\cap \overline{\Om})>0$ and so
	$\Mod_p(\Gamma)<\infty$;
	e.g. $d^{-1}\chi_{\Om}$ is an admissible function.
	By \cite[Proposition 2.17]{HK} and \cite[Theorem 1.11]{JJRRS} we have
	\begin{equation}\label{eq:capa agrees with mod}
	\rcapa_p(E,F;\Om)=\Mod_p(\Gamma);
	\end{equation}
	recall Definition \ref{def:condenser}.
	Then by \cite[Theorem 5.13]{BB-OD} we find a capacitary potential
	of $E$ and $F$ in $\Omega$, that is,
	a function $u\in N^{1,p}(\Om)$ such that $0\le u\le 1$ in $\Om$,
	$u=1$ in $E\cap\Om$, $u=0$ in $F\cap\Om$, and
	\[
	\rcapa_p(E,F;\Om)=\int_{\Om}g_u^p \,d\mu.
	\]
	We find two
	disjoint open sets $W_1,W_2\subset \Om$ with $\overline{E}\cap \Om\subset W_1$ and
	$\overline{F}\cap \Om\subset W_2$.
	Since $u$ is a capacitary potential,
	for any nonnegative
	$\varphi\in \Lip_c(W_1)\subset \Lip_c(\Om)$ we have that
	$u+\varphi$ is admissible for $\rcapa_p(E,F;\Om)$ and so
	\[
	\int_{\Om}g_u^p\,d\mu\le 
	\int_{\Om} g_{u+\varphi}^p\,d\mu,
	\]
	and so by the locality of minimal weak upper gradients
	(see e.g. \cite[Corollary 2.21]{BB}),
	\[
	\int_{\{\varphi\neq 0\}}g_u^p\,d\mu\le \int_{\{\varphi\neq 0\}} g_{u+\varphi}^p\,d\mu.
	\]
	Thus $u$ is superminimizer in $W_1$, and analogously a subminimizer in $W_2$.
	Let $u^*$ be the lsc-regularization of $u$ in $W_1$ and the analogously defined
	usc-regularization of $u$ in $W_2$, and $u^*=u$ in $\Om\setminus (W_1\cup W_2)$.
	Then by Proposition
	\ref{prop: facts about lsc regularization},
	$u^*$ is lower semicontinuous in $W_1$ and, analogously,
	upper semicontinuous in $W_2$, and
	$u=u^*$ $p$-q.e. in $\Om$.
	
	Let $\mathcal{L}'$ be the collection of super-level sets of $u^*$,
	$U_t:=\{x\in \Omega:u^*(x)>t\}$, for $t\in (0,1)$.
	By Theorem \ref{thm:fine continuity} we have $u^*=1$ in $b_p E\cap \Om$.
	Thus the sets $U_t\cap W_1$, for $t\in (0,1)$, are open and contain $b_p E\cap \Om$,
	and so each set $\inte (U_t)$ contains $b_p E\cap \Om$.
	Analogously,
	$b_p F\cap \Om\subset \ext(U_t)$ for all $t\in (0,1)$.
	In conclusion we have
	$\mathcal L'\subset \mathcal L$, or more precisely $P(U_t,\Om\cap\cdot)$
	is in $\mathcal L$ for every $t\in (0,1)$.
	Thus $\Mod_{\frac p{p-1}}(\mathcal L')\le \Mod_{\frac p{p-1}}(\mathcal L)<\infty$.
	
Let $\rho\in L^{p/(p-1)}(X)$ be any
admissible function for $\Mod_{\frac p{p-1}}(\mathcal{L}')$.
By e.g. \cite[Proposition 2.44]{BB} we know that $g_{u,1}\le g_u$ in $\Om$, where $g_{u,1}$ and $g_u$ are the minimal
$1$-weak and $p$-weak upper gradients, respectively, of $u$ in $\Om$.
Thus also $u\in N^{1,1}(\Om)$.
Since $\liploc(\Om)$ is dense in $N^{1,1}(\Om)$,
see \cite[Theorem 5.47]{BB}, it follows that $u\in\BV(\Om)$ with
$d\Vert Du\Vert\le g_{u,1}\,d\mu\le g_{u}\,d\mu$ in $\Om$.
Using also the coarea formula of
Theorem~\ref{thm:coarea}, we get
\begin{align*}
1 		&\le \int_0^1\left(\int_\Om \rho \,dP(U_t,\cdot) \right)dt \\
		&= \int_\Om \rho \,d\Vert Du\Vert\qquad\vert\vert\textrm{ since } u^*=u\ \textrm{a.e.} \\
		&\le \int_\Om \rho \,g_u \ d\mu \\
		&\le \left(\int_\Om \rho^{\frac p{p-1}} \,d\mu\right)^{\frac{p-1}p}\left(\int_\Om g_u^p \,d\mu\right)^{\frac 1p} \\
		&=\left(\int_\Om \rho^{\frac p{p-1}} \,d\mu\right)^{\frac{p-1}p}\Mod_p(\Gamma)^{\frac 1p},
\end{align*}
using also \eqref{eq:capa agrees with mod}.
Taking the infimum over admissible $\rho$, we get
\[
1 \le \Mod_{\frac p{p-1}}(\mathcal{L}')^{\frac{p-1}p}\Mod_p(\Gamma)^{\frac 1p} \le \Mod_{\frac p{p-1}}(\mathcal{L})^{\frac{p-1}p}\Mod_p(\Gamma)^{\frac 1p}.
\]
\end{proof}

In the case where $E$ and $F$ are compact, we get the lower bound also
for the following smaller family of surfaces:
 \begin{equation}\label{eq:smaller surface family}
 \mathcal L^*:=\mathcal L^*(E,F;\Om):=\{P(U,\Om\cap\cdot):\,
E \subset \inte(U) \text{ and } F\subset \ext (U)\}.
 \end{equation}
\begin{proposition}\label{prop:lower-bound with compact E and F}
  Let $\Omega\subset X$ be a nonempty bounded domain
 and let $E,F\subset \Om$ be disjoint nonempty compact sets.
 If 
 $\Mod_{\frac p{p-1}}(\mathcal{L^*})<\infty$,
 then
\[
1 \le \Mod_{\frac p{p-1}}(\mathcal{L^*})^{\frac{p-1}p}\Mod_p(\Gamma)^{\frac 1p}.
\]
\end{proposition}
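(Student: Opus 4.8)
The plan is to repeat the proof of Proposition \ref{prop:lower-bound} almost verbatim, the one new point being that the super-level sets of the capacitary potential must be made to lie in the smaller family $\mathcal{L}^*$ rather than merely in $\mathcal{L}$. We may assume $\Mod_{\frac p{p-1}}(\mathcal{L}^*)<\infty$, since otherwise there is nothing to prove. Exactly as in the proof of Proposition \ref{prop:lower-bound}, let $u$ be a capacitary potential of $(E,F;\Om)$, so that $0\le u\le 1$ in $\Om$, $u=1$ in $E$, $u=0$ in $F$, and $\int_\Om g_u^p\,d\mu=\rcapa_p(E,F;\Om)=\Mod_p(\Gamma)$. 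Since $E$ and $F$ are disjoint compact subsets of the open set $\Om$, we fix disjoint open sets $W_1,W_2\subset\Om$ with $E\subset W_1$ and $F\subset W_2$; arguing as before, $u$ is a $p$-superminimizer in $W_1$ and a $p$-subminimizer in $W_2$. Let $u^*$ be the lsc-regularization of $u$ in $W_1$, the usc-regularization of $u$ in $W_2$, and $u^*=u$ in $\Om\setminus(W_1\cup W_2)$; then $u^*=u$ $p$-q.e. (hence a.e.), $u^*$ is lower semicontinuous in $W_1$ and upper semicontinuous in $W_2$, $0\le u^*\le 1$, and by Theorem \ref{thm:fine continuity} we have $u^*=1$ on $b_p E$ and, analogously, $u^*=0$ on $b_p F$.

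The heart of the argument is to modify $u^*$ into a function that attains the value $1$ on \emph{all} of $E$ and the value $0$ on \emph{all} of $F$, at the cost of only an arbitrarily small amount of $p$-energy. By the Kellogg property (see \cite{BB}) we have $\capa_p(E\setminus b_p E)=\capa_p(F\setminus b_p F)=0$. Fix Lipschitz cutoffs $\eta\in\Lip_c(W_1)$ with $\eta=1$ on a neighborhood of $E$ and $\eta'\in\Lip_c(W_2)$ with $\eta'=1$ on a neighborhood of $F$. Given $\eps>0$, by outer regularity of $\capa_p$ (\cite[Theorem 5.31]{BB}) we find an open neighborhood $O$ of $E\setminus b_p E$ on which $\eta=1$ and a function $w_0\in N^{1,p}(X)$ with $0\le w_0\le 1$, $w_0=1$ on $O$, and $\Vert w_0\Vert_{N^{1,p}(X)}$ as small as we wish; set $w:=\eta w_0$. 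Then $w\in N^{1,p}(X)$, $0\le w\le 1$, $w=1$ on the open set $O\supset E\setminus b_p E$, $w=0$ outside $W_1$, and since $g_w\le g_{w_0}+\Lip(\eta)\,w_0$ a.e. gives $\Vert g_w\Vert_{L^p(X)}\le(1+\Lip(\eta))\Vert w_0\Vert_{N^{1,p}(X)}$, we may arrange $\Vert g_w\Vert_{L^p(X)}<\eps$. Building $w'$ from $F\setminus b_p F$ in the same way, with $w'=1$ on an open neighborhood $O'$ of $F\setminus b_p F$ on which $\eta'=1$, $w'=0$ outside $W_2$, and $\Vert g_{w'}\Vert_{L^p(X)}<\eps$, we set
\[
v:=\min\bigl(\max(u^*,w),\,1-w'\bigr)\qquad\text{in }\Om .
\]
Since $u^*,w,w'\in N^{1,p}$, we get $v\in N^{1,p}(\Om)$ with $g_v\le g_u+g_w+g_{w'}$ a.e. in $\Om$, hence, as in the proof of Proposition \ref{prop:lower-bound}, $v\in\BV(\Om)$ with $d\Vert Dv\Vert\le g_v\,d\mu$ and $\Vert g_v\Vert_{L^p(\Om)}\le\Mod_p(\Gamma)^{1/p}+2\eps$.

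Next I would check that for every $t\in(0,1)$ the set $U_t:=\{x\in\Om:\,v(x)>t\}$ satisfies $E\subset\inte(U_t)$ and $F\subset\ext(U_t)$. On $W_1$ we have $w'=0$ and $0\le u^*,w\le 1$, so $v=\max(u^*,w)$ there; since $\{u^*>t\}\cap W_1$ is open and contains $b_p E$ (as $u^*=1$ on $b_p E\subset W_1$) and $w=1$ on the open set $O\supset E\setminus b_p E$, the set $(\{u^*>t\}\cap W_1)\cup O$ is open, contains $E$, and lies in $U_t$, whence $E\subset\inte(U_t)$. Symmetrically, on $W_2$ we have $w=0$, so $v=\min(u^*,1-w')\le u^*$; since $\{u^*<t\}\cap W_2$ is open and contains $b_p F$, and $w'=1$ on the open set $O'\supset F\setminus b_p F$ (where $v=0$), the set $(\{u^*<t\}\cap W_2)\cup O'$ is open, contains $F$, and lies in $\{v<t\}\subset X\setminus U_t$, whence $F\subset\ext(U_t)$. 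Therefore $P(U_t,\Om\cap\cdot)\in\mathcal{L}^*$ whenever $U_t$ has finite perimeter in $\Om$, which by the coarea formula (Theorem \ref{thm:coarea}) is the case for a.e. $t\in(0,1)$.

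To conclude, let $\rho\in L^{\frac p{p-1}}(X)$ be any function admissible for $\mathcal{L}^*$. Then $\int_\Om\rho\,dP(U_t,\cdot)\ge 1$ for a.e. $t\in(0,1)$, so by the coarea formula, the bound $d\Vert Dv\Vert\le g_v\,d\mu$, and Hölder's inequality,
\[
1\le\int_0^1\!\Bigl(\int_\Om\rho\,dP(U_t,\cdot)\Bigr)dt=\int_\Om\rho\,d\Vert Dv\Vert\le\int_\Om\rho\,g_v\,d\mu\le\Bigl(\int_X\rho^{\frac p{p-1}}\,d\mu\Bigr)^{\frac{p-1}p}\bigl(\Mod_p(\Gamma)^{\frac1p}+2\eps\bigr).
\]
Taking the infimum over admissible $\rho$ gives $1\le\Mod_{\frac p{p-1}}(\mathcal{L}^*)^{\frac{p-1}p}\bigl(\Mod_p(\Gamma)^{\frac1p}+2\eps\bigr)$, and letting $\eps\to0^+$ yields the claim. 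The main obstacle is the construction in the second paragraph: one must produce a competitor equal to $1$ on all of $E$ and $0$ on all of $F$ — in particular at the capacitarily thin points, where $u^*$ itself may fail to do so — while wasting only $O(\eps)$ of $p$-energy, and one must arrange the semicontinuity so that it survives the $\max$ and $\min$ and produces open neighborhoods of $E$ inside $U_t$ and of $F$ inside its exterior. What makes this go through is the Kellogg property, which renders $E\setminus b_p E$ and $F\setminus b_p F$ negligible for $\capa_p$, together with the fact that a function of small $N^{1,p}$-norm is automatically small in $L^p$, so that multiplying by the fixed localizing cutoffs $\eta,\eta'$ is cheap.
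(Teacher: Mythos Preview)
Your proof is correct, but it takes a genuinely different route from the paper's.

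The paper's proof is a one-liner: since $E$ and $F$ are compact subsets of $\Om$, Theorem~1.1 of Kallunki--Shanmugalingam \cite{KaSh} supplies, for every $\eps>0$, a \emph{locally Lipschitz} function $u\in\liploc(\Om)$ with $0\le u\le 1$, $u=1$ on $E$, $u=0$ on $F$, and $\int_{\Om}g_u^p\,d\mu<\rcapa_p(E,F;\Om)+\eps$. Continuity of $u$ makes every super-level set $\{u>t\}$ open, so $E\subset\{u>t\}=\inte(\{u>t\})$ and $F\subset\{u<t\}\subset\ext(\{u>t\})$ immediately, placing all the super-level sets in $\mathcal L^*$; the coarea/H\"older chain from Proposition~\ref{prop:lower-bound} then finishes the proof.

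Your approach instead keeps the genuine capacitary potential $u$ (and its semicontinuous regularization $u^*$), and corrects for the possible failure of $u^*$ to equal $1$ on all of $E$ (respectively $0$ on all of $F$) by invoking the Kellogg property $\capa_p(E\setminus b_p E)=0$ and patching in cheap $N^{1,p}$ functions $w,w'$ supported near the thin points. This is more work, but it has the merit of being self-contained within the toolkit already assembled in the paper (fine continuity, the Kellogg-type result from \cite{BBL}, outer regularity of $\capa_p$), whereas the paper's argument imports the continuous-capacity result from \cite{KaSh}. Both routes pay an $\eps$ of energy and then let $\eps\to 0$; the paper spends it on approximating the minimizer by a Lipschitz function, you spend it on correcting the minimizer at thin points.
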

\begin{proof}
The proof is almost the same as for Proposition \ref{prop:lower-bound};
we only need to note that since $E$ and $F$ are compact,
according to Theorem 1.1 in \cite{KaSh} we find
for every $\eps>0$ a function
$u\in \liploc(\Om)$ with $0\le u\le 1$ in $\Om$, $u=1$ in $E$, $u=0$ in $F$, and
	\[
	\int_{\Om}g_u^p \,d\mu<\rcapa_p(E,F;\Om)+\eps.
	\]
	Then we can consider the super-level sets
	$\{x\in \Omega:u(x)>t\}$ for $t\in (0,1)$, which all belong to $\mathcal L^*$.
\end{proof}

Now we prove the upper bound.
Part of the idea for the following proof came from Lohvansuu and Rajala \cite{LR};
the authors would like to thank them for sharing an early version of their manuscript.

\begin{proposition}\label{prop:upper-bound}
Let $\Omega\subset X$ be open and
$E,F\subset X$ be disjoint sets with
$\capa_p(b_p E\setminus E)=0$ and $\capa_p(b_p F\setminus F)=0$.
If $\Mod_{\frac p{p-1}} (\mathcal{L})=\infty$ then $\Mod_p(\Gamma)=0$, and if
$0<\Mod_{\frac p{p-1}} (\mathcal{L})<\infty$
then
\[
\Mod_{\frac p{p-1}} (\mathcal{L})^{\frac{p-1}p}
\Mod_p(\Gamma)^{\frac 1p} \le C
\]
for a constant $C$.
\end{proposition}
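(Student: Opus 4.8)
The plan is to manufacture, out of an optimal density $\rho$ for the surface family $\mathcal L$, an almost optimal density for the curve family $\Gamma$; the bridge between the two is Lemma~\ref{lem:phi-and-rho}, fed with test functions for $\mathcal L$ that are concentrated near a single curve, which in turn are produced by inflating the curve into a thin tube and invoking the relative isoperimetric inequality.

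I would first dispose of the trivial cases: if $\Mod_p(\Gamma)=0$ or $\Mod_{\frac p{p-1}}(\mathcal L)=0$ there is nothing to prove. Otherwise $\Gamma$ contains a rectifiable curve $\gamma$, and (granting the admissibility assertion below) the tube function $\phi_\epsilon$ built from $\gamma$ is admissible for $\mathcal L$ and lies in $L^{\frac p{p-1}}(X)$, being bounded with bounded support; hence $\Mod_{\frac p{p-1}}(\mathcal L)\le\int_X\phi_\epsilon^{\frac p{p-1}}\,d\mu<\infty$. In particular $\Mod_{\frac p{p-1}}(\mathcal L)=\infty$ forces $\Gamma$ to contain no rectifiable curve, i.e.\ $\Mod_p(\Gamma)=0$, which settles that case; otherwise we may assume $0<M:=\Mod_{\frac p{p-1}}(\mathcal L)<\infty$ and, by Lemma~\ref{lem:existence of minimal rho}, fix a $\tfrac p{p-1}$-weakly admissible $\rho$ for $\mathcal L$ with $\int_X\rho^{\frac p{p-1}}\,d\mu=M$. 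The whole proof then reduces to the single claim that $\tfrac CM\,\rho^{\frac1{p-1}}$ is $p$-weakly admissible for $\Gamma$ for a constant $C$ depending only on $p$ and $X$: since $\rho^{\frac1{p-1}}\in L^p(X)$ (because $\int_X(\rho^{\frac1{p-1}})^p\,d\mu=M$), this gives
\[
\Mod_p(\Gamma)\le\int_X\Big(\tfrac CM\,\rho^{\frac1{p-1}}\Big)^{p}\,d\mu=\frac{C^p}{M^p}\int_X\rho^{\frac p{p-1}}\,d\mu=C^pM^{1-p},
\]
and raising to the power $\tfrac1p$ yields $M^{\frac{p-1}p}\Mod_p(\Gamma)^{\frac1p}\le C$.

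To prove the claim I would fix a rectifiable $\gamma\in\Gamma$ and show $\int_\gamma\rho^{\frac1{p-1}}\,ds\ge M/C$ for $p$-a.e.\ such $\gamma$. For small $\epsilon>0$ let $T_\epsilon$ be the open $\epsilon$-neighbourhood of the image of $\gamma$ and set $\phi_\epsilon:=\beta\,\epsilon\,\mu(B(\cdot,\epsilon))^{-1}\chi_{T_{C_1\epsilon}}$ with structural constants $\beta,C_1\ge1$. The heart of the matter is that $\phi_\epsilon$ is admissible for $\mathcal L$: given $U\in\mathcal L$, which separates $b_p E\cap\Om$ from $b_p F\cap\Om$, one walks along a chain of $\epsilon$-balls centred on $\gamma$ from its $E$-end to its $F$-end and locates a ball $B$ of radius comparable to $\epsilon$ on which $\mu(B\cap U)$ and $\mu(B\setminus U)$ are each a fixed fraction of $\mu(B)$; the relative isoperimetric inequality then gives $P(U,\lambda B)\gtrsim\mu(B)/\epsilon$, while doubling gives $\phi_\epsilon\gtrsim\epsilon/\mu(B)$ on $\lambda B\subset T_{C_1\epsilon}$, so $\int_X\phi_\epsilon\,dP(U,\Om\cap\cdot)\ge1$ once $\beta$ is large. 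Applying Lemma~\ref{lem:phi-and-rho} — the only place the optimality of $\rho$ is used — yields
\[
M\le\int_X\phi_\epsilon\,\rho^{\frac1{p-1}}\,d\mu=\beta\,\epsilon\int_{T_{C_1\epsilon}}\frac{\rho^{\frac1{p-1}}}{\mu(B(\cdot,\epsilon))}\,d\mu.
\]
Covering the image of $\gamma$ by balls $B(\gamma(j\epsilon),C_2\epsilon)$ and using doubling, the right-hand side is at most a structural constant times the Riemann-type sum $\sum_j\epsilon\avgint_{B(\gamma(j\epsilon),C_2\epsilon)}\rho^{\frac1{p-1}}\,d\mu$. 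Since $\rho^{\frac1{p-1}}\in L^p(X)$ the maximal function $\mathcal M(\rho^{\frac1{p-1}})$ lies in $L^p(X)$, so for $p$-a.e.\ $\gamma$ we have both $\int_\gamma\mathcal M(\rho^{\frac1{p-1}})\,ds<\infty$ and $\gamma(t)$ a Lebesgue point of $\rho^{\frac1{p-1}}$ for a.e.\ $t$ (curves meeting a $\mu$-null set on a set of positive length form a $p$-exceptional family). Dominated convergence as $\epsilon\to0$ then gives $M\le C\int_\gamma\rho^{\frac1{p-1}}\,ds$, proving the claim.

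The step I expect to be the main obstacle is the admissibility of $\phi_\epsilon$ for $\mathcal L$, and within it the production, uniformly over all $U\in\mathcal L$, of the ``balanced'' ball along $\gamma$, which is delicate near the endpoints: although $\gamma(0)\in b_p E\cap\Om\subset\inte(U)$, a fixed $\epsilon$-ball about $\gamma(0)$ need not be mostly contained in $U$, since $\partial U$ may approach $\gamma(0)$ at scale $\epsilon$. Resolving this is exactly what the fine-topological hypotheses are for: by the Kellogg property $\capa_p(E\setminus b_p E)=0$, so together with the standing assumption $\capa_p(b_p E\setminus E)=0$ (and likewise for $F$) one may, after discarding a $p$-exceptional family of curves, take the endpoints of $\gamma$ to be $p$-thickness points of $E$ and $F$; the $p$-thickness of $b_p E$ at $\gamma(0)$ (and $b_p F$ at $\gamma(\ell_\gamma)$) then keeps $\partial U$ away from $\gamma(0)$ at a scale comparable to $\epsilon$, so that the balanced ball occurs at scale $\asymp\epsilon$ and the estimate goes through. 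An alternative is to first replace $b_p E\cap\Om$ and $b_p F\cap\Om$ by suitable $p$-finely open neighbourhoods on which the separation is uniform, establish the bound there, and exhaust using Lemma~\ref{lem:mod-continuity}. Away from this point the argument is routine: the doubling property for the geometric estimates, and Fuglede-type almost-every-curve arguments for the passage to the limit.
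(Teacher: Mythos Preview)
Your overall strategy---build from each curve $\gamma$ a tube function admissible for $\mathcal L$, feed it into Lemma~\ref{lem:phi-and-rho} against the extremal density $\rho$, and read off an admissible function for $\Gamma$---is exactly the paper's. The difference, and the gap, is in how you realise the tube.

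You work at a \emph{uniform} scale $\epsilon$ along $\gamma$ and argue that a balanced ball must appear somewhere in the chain. But the chain argument needs the first ball $B(\gamma(0),\epsilon)$ to have $U$-density $\ge 1/2$ and the last to have density $<1/2$, and this is simply not available uniformly in $U\in\mathcal L$: the definition only gives $\gamma(0)\in\inte(U)$, so $\partial U$ may sit at distance $\delta\ll\epsilon$ from $\gamma(0)$ (take e.g.\ $E$ a closed ball and $U$ its $\delta$-neighbourhood). Your proposed cure---that $p$-thickness of $E$ at $\gamma(0)$ ``keeps $\partial U$ away at scale comparable to $\epsilon$''---does not hold. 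Thickness is a capacity condition on $E$; it yields no metric lower bound on $\dist(\gamma(0),\partial U)$, and it does not even force $E$ (hence $U$) to have positive density at $\gamma(0)$. So the admissibility of $\phi_\epsilon$ for all of $\mathcal L$ is unproven, and Lemma~\ref{lem:phi-and-rho} cannot be invoked as written.

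The paper resolves this with two coupled devices. First, it uses the fine-topological hypotheses not at the level of a single curve but to produce open sets $G_i$ of small capacity so that $(E\cup F)\setminus G_i$ is \emph{closed}; curves avoiding $G_i$ (a $p$-exhausting subfamily $\Gamma_i$) then have endpoints in a compact piece of $E$, respectively $F$. Second, and this is the point your alternative gestures at but does not implement, it replaces the uniform scale by a \emph{Whitney} scale: the ball at $x\in|\gamma|$ has radius $\approx\dist(x,(E\cup F)\setminus G_i)$, so near the endpoints the balls are arbitrarily small. After restricting to $\mathcal L_{i,j}=\{U\in\mathcal L:\dist(\overline{\Om_i}\cap E\setminus G_i,X\setminus U)>1/j,\ \dist(\overline{\Om_i}\cap F\setminus G_i,U)>1/j\}$, every Whitney ball of radius $<1/(2j)$ is entirely inside or outside $U$, so the balanced ball must occur among the finitely many balls of scale $\ge 2^{K_j}$---and now the isoperimetric step goes through uniformly in $U\in\mathcal L_{i,j}$. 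One then exhausts $\mathcal L=\bigcup_j\mathcal L_{i,j}$ and $\Gamma=\bigcup_i\Gamma_i$ via Lemma~\ref{lem:mod-continuity}. A side benefit of the Whitney sum is that no $\epsilon\to 0$ limit is needed: bounding each average by the maximal function gives $\Mod_{p/(p-1)}(\mathcal L_{i,j})\le C\int_\gamma\mathcal M\rho_{i,j}^{1/(p-1)}\,ds$ directly, so $C\,\mathcal M\rho_{i,j}^{1/(p-1)}/\Mod_{p/(p-1)}(\mathcal L_{i,j})$ itself is admissible for $\Gamma_i$, and one closes with the $L^p$-boundedness of $\mathcal M$.
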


In particular, $E$ and $F$ can be closed sets.

\begin{proof}
Let $x\in X \setminus b_p E$.
Since $\capa_p(b_p E\setminus E)=0$, by definition of the variational capacity we get
\[
\rcapa_p(B(x,t)\cap b_p E, B(x,2t))\le \rcapa_p(B(x,t)\cap E, B(x,2t))
\]
for every
$r>0$. Thus
\begin{align*}
\int_0^1 &\left(\frac{\rcapa_p(B(x,t)\cap b_p E, B(x,2t))}{\rcapa_p(B(x,t), B(x,2t))} \right)^{\frac{1}{p-1}} \frac{dt}{t}\\
& \hspace{1.4in}\le \int_0^1 \left(\frac{\rcapa_p(B(x,t)\cap E, B(x,2t))}{\rcapa_p(B(x,t), B(x,2t))} \right)^{\frac{1}{p-1}} \frac{dt}{t} <\infty.
\end{align*}
Thus, $X\setminus b_p E$ is $p$-finely open. Similarly,
$X\setminus b_p F$ is $p$-finely open and so $X\setminus (b_p E\cup b_p F)$ is as well.

Now by Theorem~1.4 in \cite{BBL},
we have that $X\setminus (b_p E\cup b_p F)$ is $p$-quasiopen.
Then for each $i\in \N$, we can find an open set $G_i\subset X$ 
with $\capa_p(G_i)<1/i$ and so that $(X\setminus (b_p E \cup b_p F))\cup G_i$ is open.
We can assume that the sets $G_i$ form a decreasing sequence.
Furthermore, we know that $\capa_p(E\setminus b_p E)=0$ and $\capa_p(F\setminus b_p F)=0$
(see Corollary~1.3 in \cite{BBL}) so then since $\capa_p$ is an outer capacity,
we can choose $G_i$ to contain
$E\Delta b_p E$
and $F\Delta b_p F$, where $\Delta$ denotes the symmetric difference.
We now have that
$(b_p E\cup b_p F)\setminus G_i=(E\cup F)\setminus G_i$ and this is a closed set. 

Take open sets $\Om_1\Subset \Om_2\Subset \ldots \Subset \Om$ with
$\Om=\bigcup_{i=1}^{\infty}\Om_i$.
Define
\[
\Gamma_i :=\{\gamma \in \Gamma(E,F;\Omega) :\, |\gamma| \subset \Om_i\setminus G_i\},
\]
where $|\gamma|$ is the image of $\gamma$ in $X$.
Fix $i\in\N$ and a
rectifiable curve $\gamma \in\Gamma_i$ (assume for now
that $\Gamma_i\neq \emptyset$).
Let
\[
\mathcal L_{i,j}:=\{P(U,\Om\cap\cdot):\,
\dist(\overline{\Om_i}\cap E\setminus G_i,X\setminus U)>j^{-1}
\text{ and }
\dist(\overline{\Om_i}\cap F\setminus G_i,U)>j^{-1}\}.
\]
Also fix $j\in\N$.
We wish to construct an admissible function for $\mathcal L_{i,j}$.
First we construct a Whitney covering of $\gamma$. 
Set
\[
d(x):=\dist(x,(\overline{\Om_i}\cap(E \cup  F)\setminus G_i))\cup (X\setminus \Om))
\]
and note that $d(x)>0$ for all $x\in |\gamma|\setminus (E\cup F)$.
For $k\in\Z$ set 
\[
|\gamma|_k := \{x\in|\gamma| :\, 2^{k-1}<d(x)\le 2^k\}
\]
and
\[
\mathcal{F}_k:=\left\{B\Big(x,\frac{d(x)}{25\lambda}\Big) :\, x\in|\gamma|_k\right\}.
\]
So $\mathcal{F}_k$ forms a cover of $|\gamma|_k$.
Then by the $5$-covering theorem, we can find a pairwise disjoint subcollection $\mathcal{G}_k\subset \mathcal{F}_k$ such that 
\[
|\gamma|_k\subset\bigcup_{B\in\mathcal{F}_k} B \subset \bigcup_{B\in\mathcal{G}_k} 5B.
\]
Since $|\gamma|_k$ is bounded, $\mathcal G_k$ is finite for each $k$. 
Letting $\mathcal B := \bigcup_{k\in\Z} \mathcal G_k$, the collection of five times enlarged balls from $\mathcal B$ is a cover for $|\gamma|\setminus (E\cup F)$.
Now for $U\in\mathcal L_{i,j}$, set
\[
T:=\sup\left\{t\in(0,\ell_{\gamma}) :\, \frac{\mu(U\cap 5B)}{\mu(5B)} \ge \frac 12 \text{ for all } B \in\mathcal B \text{ such that } \gamma(t) \in 5B\right\}.
\]
We know that the above supremum is attained and $T \in (0,\ell_{\gamma})$ since
$\overline{\Om_i}\cap b_p E \setminus G_i=\overline{\Om_i}\cap E\setminus G_i$
is a compact subset of the open set $\inte(U)$
and
$\overline{\Om_i}\cap b_p F \setminus G_i=\overline{\Om_i}\cap F\setminus G_i$
is a compact subset of the open set $\ext(U)$, and by the definition of $\mathcal B$.
Let $B_1\in\mathcal B$ such that $\gamma(T)\in 5B_1$.
By continuity of $\gamma$ and definition of $T$, there exists
$\kappa>0$ such that $\gamma(T-\kappa)\in 5B_1$ and
$\frac{\mu(U\cap 5B)}{\mu(5B)} \ge \frac 12$ for all
$B \in\mathcal B$ such that $\gamma(T-\kappa) \in 5B$. Thus
$\frac{\mu(U\cap 5B_1)}{\mu(5B_1)}\ge \frac 12$.
Again by continuity of $\gamma$, there exists $\delta >0$ such that $\gamma(T+\delta)\in 5B_1$.
Then since $T +\delta > T$, we know that there exists a ball $B_2\in\mathcal B$
with $\gamma(T+\delta)\in 5B_2$ such that $\frac{\mu(U\cap 5B_2)}{\mu(5B_2)} < \frac 12$. 
So we have
\begin{equation}\label{eq-1}
\frac{\mu(U\cap 5B_2)}{\mu(5B_2)} < \frac 12\le\frac{
\mu(U\cap 5B_1)}{\mu(5B_1)}.
\end{equation}
By the fact that $5B_1\cap 5B_2$ is nonempty
(since it contains $\gamma(T+\delta)$), it is easy to check that
$\rad(B_1)\le 2\rad(B_2)\le 4\rad(B_1)$.
Hence $5B_2\subset 25 B_1$.
By using first \eqref{eq-1} and the doubling property and
then the relative isoperimetric inequality of
Definition \ref{def:relative isoperimetric inequality}, we get
for some constant $\widetilde C>0$ (depending only on the doubling constant)
\begin{equation}\label{eq:rel-isoperim}
\frac{1}{\widetilde C} 
\le 
\min\left\{\frac{\mu(25B_1\cap U)}{\mu(25B_1)},\frac{\mu(25B_1\setminus U)}{\mu(25B_1)}\right\}
\le \frac{25C_I \rad (B_1)}{\mu(B_1)} P(U, 25\lambda B_1).
\end{equation}
Choose $K_j\in\Z$ so that
$2^{K_j}<\frac{1}{2}\min\left\{\frac{1}{j},\dist(\Om_i',X\setminus \Om)\right\}$. 
Then for any $k\le K_j$ and $B\in\mathcal G_k$, either
$25B \subset U$ or $25B$ and $U$ are disjoint, which implies that
$\frac{\mu(U\cap 25B)}{\mu(25B)}\in\{0,1\}$.
Therefore we know that $B_1 \in \mathcal G_{k}$
for some $k\ge K_j$.
Define
\[
\phi_{i,j} := 25 C_I\widetilde{C}\sum_{k\ge K_j} \sum_{ B\in\mathcal G_k} \frac{\rad(B)}{\mu(B)} 
\chi_{25\lambda B}.
\]
Recall that $\mathcal G_k$ is finite for each $k$. 
Also $|\gamma|$ is bounded, so there exists a $K_0\in\Z$ such that $\mathcal G_k$ is empty for all $k\ge K_0$. 
Hence the function $\phi_{i,j}$ is $p/(p-1)$-integrable.
Furthermore, $\phi_{i,j}$ is admissible for $\mathcal{L}_{i,j}$, since for any
$U\in \mathcal L_{i,j}$, by \eqref{eq:rel-isoperim} we have
\begin{align*}
\int_{\Omega}\phi_{i,j}\,dP(U,\cdot) 
& = 25 C_I\widetilde{C}\int_{\Omega} \sum_{k\ge K_j}
\sum_{B\in\mathcal G_k} \frac{ \rad(B)}{\mu(B)} \chi_{25\lambda B} \,dP(U,\cdot)\\
& = 25 C_I\widetilde{C}\sum_{k\ge K_j}
\sum_{B\in\mathcal G_k} \frac{\rad(B)}{\mu(B)} P(U,25\lambda B)\\
&\ge 1.
\end{align*}
Using Lemma \ref{lem:existence of minimal rho}, pick
a $p/(p-1)$-weakly admissible function $\rho_{i,j}$ such that 
\[
\Mod_{\frac p{p-1}}(\mathcal L_{i,j}) = \int_X \rho_{i,j}^{\frac p{p-1}} \,d\mu.
\]
Recall the definition of the noncentered Hardy-Littlewood maximal function from
\eqref{eq:definition of maximal function}.
We now apply Lemma \ref{lem:phi-and-rho} which gives
\begin{equation}\label{eq:admissibility for Gamma i}
\begin{split}
\Mod_{\frac p{p-1}}(\mathcal L_{i,j}) 
& \le \int_X \phi_{i,j}\, \rho_{i,j}^{\frac 1{p-1}}\, d\mu \\
& \le C \int_X \sum_{k\ge K_j} \sum_{B \in \mathcal{G}_k} \frac{\rad(B)}{\mu(B)} \chi_{25\lambda B} \;\rho_{i,j}^{\frac 1{p-1}}\, d\mu \\
&\le C \sum_{k\in\Z} \sum_{B \in \mathcal{G}_k} \frac{\rad(B)}{\mu(B)} \int_{25\lambda B} \rho_{i,j}^{\frac 1{p-1}}\, d\mu \\
& \le C \sum_{k\in\Z} \sum_{B \in \mathcal{G}_k}  \rad(B) \avgint_{25\lambda B} \rho_{i,j}^{\frac 1{p-1}} d\mu \\
&  \le C \sum_{k\in\Z} \sum_{B \in \mathcal{G}_k}  \rad(B) \inf_{x\in B} \mathcal{M}\rho_{i,j}^{\frac 1{p-1}}(x) \\
& \le C \int_{\gamma} \mathcal{M}\rho_{i,j}^{\frac 1{p-1}} \,ds;
\end{split}
\end{equation}
the last inequality holds because the curve $\gamma$ travels at least the length
$\rad(B)$ inside $B$,
and the balls in each $\mathcal G_k$ are pairwise disjoint
and clearly two balls $B_1\in \mathcal G_k$ and $B_2\in \mathcal G_l$ can only intersect
if $|k-l|=1$.

Now we show that $\bigcup_j \mathcal L_{i,j} = \mathcal L$.
First note that
\[
\bigcup_j \mathcal L_{i,j} =\{U\in\mathcal L :\,
\dist(\overline{\Om_i}\cap E\setminus G_i,X\setminus U)>0
\text{ and }
\dist(\overline{\Om_i}\cap F\setminus G_i,U)>0\}.
\]
Now if we take $U\in\mathcal L$, then $\overline{\Omega_i}\cap b_p E\setminus G_i \subset \Omega \cap b_p E \subset \inte (U)$, i.e. $\overline{\Omega_i} \cap b_p E\setminus G_i$ is a compact subset of the open set $\inte (U)$. 
Hence there is a strictly positive distance between
$\overline{\Omega_i}\cap b_p E \setminus G_i=\overline{\Omega_i}\cap E \setminus G_i$
and $X\setminus U$.
A similar argument shows that there is a strictly positive distance between
$\overline{\Omega_i} \cap F \setminus G_i$ and $U$.
Therefore $U\in \bigcup_j \mathcal L_{i,j}$, proving that
$\bigcup_j \mathcal L_{i,j} = \mathcal L$.

Now note that since we are assuming $\Mod_{\frac p{p-1}}(\mathcal L)\neq 0$,
and the families $\mathcal{L}_{i,j}$ are increasing in $j$,
by Lemma \ref{lem:mod-continuity}
we have $\Mod_{\frac p{p-1}}(\mathcal L_{i,j})>0$
for all sufficiently large $j\in\N$
(with $i$ still fixed).
Thus by \eqref{eq:admissibility for Gamma i},
$C\Mod_{\frac p{p-1}}(\mathcal L_{i,j})^{-1}\mathcal M \rho_{i,j}^{\frac 1{p-1}}$
is admissible for $\Gamma_i$.
Therefore
\[
\Mod_p(\Gamma_i) 
\le C\Mod_{\frac p{p-1}}(\mathcal L_{i,j})^{-p} \int_X \left(\mathcal M \rho_{i,j}^{\frac 1{p-1}}\right)^p d\mu.
\]
Since the maximal function is a bounded operator from $L^p(X)$ to $L^p(X)$ when
$1<p<\infty$, see e.g. \cite[Theorem 3.13]{BB}, we get
\begin{align*}
\Mod_p (\Gamma_i)^{\frac 1p} 
& \le C \Mod_{\frac{p}{p-1}} (\mathcal L_{i,j})^{-1} \Big\Vert \mathcal M \rho_{i,j}^{\frac 1{p-1}} \Big\Vert_{L^p(X)} \nonumber \\ 
& \le C \Mod_{\frac{p}{p-1}} (\mathcal L_{i,j})^{-1} \Big\Vert \rho_{i,j}^{\frac 1{p-1}} \Big\Vert_{L^p(X)} \nonumber \\
& = C \Mod_{\frac{p}{p-1}} (\mathcal L_{i,j})^{-1} \left(\int_X \rho_{i,j}^{\frac p{p-1}}\,d\mu \right)^{\frac 1p} \nonumber \\
& = C \Mod_{\frac{p}{p-1}} (\mathcal L_{i,j})^{\frac{1-p}{p}}. 
\end{align*}
Recalling that
$\lim_{j\to \infty} \Mod_{\frac{p}{p-1}} (\mathcal L_{i,j}) 
= \Mod_{\frac p {p-1}} (\mathcal L)$,
we get
\begin{equation}\label{eq:Gamma-L}
\Mod_p(\Gamma_i)^{\frac 1p}
\le C \Mod_{\frac p{p-1}}(\mathcal L)^{\frac{1-p}p},
\end{equation}
and in particular $\Mod_p(\Gamma_i)=0$ if
$\Mod_{\frac p{p-1}}(\mathcal L)=\infty$.
Note that \eqref{eq:Gamma-L} holds also if $\Gamma_i=\emptyset$.

Finally note that the sequence $\Gamma_i$ is increasing with
$\bigcup_{i\in\N} \Gamma_i=\Gamma\setminus N$, where 
\[
N:=\left\{\gamma\in\Gamma :\, |\gamma|\cap \bigcap_i G_i \neq \emptyset\right\}.
\] 
But $\capa_p\left(\bigcap_i G_i\right)=0$, and so $\Mod_p(N)=0$,
see e.g. \cite[Proposition 1.48]{BB}.
So by Lemma \ref{lem:mod-continuity} we have
$\lim_{i\to\infty} \Mod_p(\Gamma_i)=\Mod_p(\Gamma\setminus N)=\Mod_p(\Gamma)$.
Combining this with \eqref{eq:Gamma-L} above, we get 
\[
\Mod_p(\Gamma)^{\frac 1p}=\lim_{i\to\infty}\Mod_p(\Gamma_i)^{\frac 1p}
\le C\Mod_{\frac p{p-1}}(\mathcal L)^{\frac{1-p}p},
\]
and in particular $\Mod_p(\Gamma)=0$ if
$\Mod_{\frac p{p-1}}(\mathcal L)=\infty$.
\end{proof}

Now Theorem \ref{thm:duality of moduli} from the introduction follows from
Proposition \ref{prop:lower-bound} and Proposition \ref{prop:upper-bound}.
We give the theorem in the following more precise form.
\begin{theorem}\label{thm:duality of moduli in text}
Let $\Om\subset X$ be nonempty, open and bounded and let
$E,F\subset X$ be such that $\overline{E}\cap \overline{F}=\emptyset$ and
$\capa_p(b_p E\setminus E)=0$
and $\capa_p(b_p F\setminus F)=0$.
If $\Mod_{\frac p{p-1}} (\mathcal{L})=\infty$ then $\Mod_p(\Gamma)=0$, and else
\[
1 \le \Mod_{\frac p{p-1}}(\mathcal{L})^{\frac{p-1}p}\Mod_p(\Gamma)^{\frac 1p}\le C
\]
for some constant $C\ge 1$.
\end{theorem}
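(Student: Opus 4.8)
The plan is to deduce Theorem \ref{thm:duality of moduli in text} directly from Proposition \ref{prop:lower-bound} and Proposition \ref{prop:upper-bound}, the two halves already established above; the only work left is to check that the hypotheses line up and to handle the various cases for $\Mod_{\frac p{p-1}}(\mathcal L)$.

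First I would treat the lower bound. The assumptions of Theorem \ref{thm:duality of moduli in text} include that $\Om$ is nonempty, open and bounded and that $\overline E\cap\overline F=\emptyset$, which are exactly the hypotheses of Proposition \ref{prop:lower-bound}. That proposition therefore gives $\Mod_p(\Gamma)<\infty$ unconditionally, and $1\le \Mod_{\frac p{p-1}}(\mathcal L)^{\frac{p-1}p}\Mod_p(\Gamma)^{\frac 1p}$ whenever $\Mod_{\frac p{p-1}}(\mathcal L)<\infty$. One small observation to record here: combined with $\Mod_p(\Gamma)<\infty$, this inequality forces $\Mod_{\frac p{p-1}}(\mathcal L)>0$, since otherwise the right-hand side would vanish and could not dominate $1$. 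So, in the ``else'' case of the theorem, we automatically have $0<\Mod_{\frac p{p-1}}(\mathcal L)<\infty$.

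Next I would treat the upper bound. Since $\overline E\cap\overline F=\emptyset$ gives $E\cap F=\emptyset$, and the theorem also assumes $\capa_p(b_pE\setminus E)=0$ and $\capa_p(b_pF\setminus F)=0$ with $\Om$ open, all the hypotheses of Proposition \ref{prop:upper-bound} hold. If $\Mod_{\frac p{p-1}}(\mathcal L)=\infty$, that proposition yields $\Mod_p(\Gamma)=0$, which is the first alternative in the statement. Otherwise, by the positivity observed above, $0<\Mod_{\frac p{p-1}}(\mathcal L)<\infty$, so Proposition \ref{prop:upper-bound} applies in its quantitative form and gives $\Mod_{\frac p{p-1}}(\mathcal L)^{\frac{p-1}p}\Mod_p(\Gamma)^{\frac 1p}\le C$. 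Together with the lower bound from the previous step, this is precisely the two-sided estimate claimed in the ``else'' case.

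The main obstacle in this final step is purely bookkeeping rather than analytic: one must make the positivity of $\Mod_{\frac p{p-1}}(\mathcal L)$ available before invoking the upper-bound proposition, and split cleanly into $\Mod_{\frac p{p-1}}(\mathcal L)=\infty$ versus $\Mod_{\frac p{p-1}}(\mathcal L)<\infty$. All the genuine difficulty has already been absorbed into Proposition \ref{prop:lower-bound} (capacitary potentials, lsc-regularization, and fine continuity of superminimizers) and Proposition \ref{prop:upper-bound} (Whitney coverings along curves, the relative isoperimetric inequality, and $L^p$-boundedness of the maximal operator), so I expect no new obstruction at the level of the theorem itself.
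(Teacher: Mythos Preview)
Your proposal is correct and follows essentially the same approach as the paper's own proof: combine Proposition~\ref{prop:lower-bound} and Proposition~\ref{prop:upper-bound}, splitting into the cases $\Mod_{\frac p{p-1}}(\mathcal L)=\infty$ and $\Mod_{\frac p{p-1}}(\mathcal L)<\infty$. If anything, you are slightly more explicit than the paper in noting that the lower bound together with $\Mod_p(\Gamma)<\infty$ forces $\Mod_{\frac p{p-1}}(\mathcal L)>0$, which is precisely the positivity needed to invoke the quantitative part of Proposition~\ref{prop:upper-bound}.
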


\begin{proof}
If $\Mod_{\frac p{p-1}} (\mathcal{L})=\infty$ then $\Mod_p(\Gamma)=0$ by
Proposition \ref{prop:upper-bound}. Else
Proposition \ref{prop:lower-bound} gives $\Mod_p(\Gamma)<\infty$
as well as the lower bound of the theorem, and
in particular this guarantees $0<\Mod_p(\Gamma)<\infty$. Then the upper bound
follows from Proposition \ref{prop:upper-bound}.
\end{proof}

\section{Proof of Theorem \ref{thm:quasiconformality}}\label{sec:qc characterization}

Now we can prove Theorem \ref{thm:quasiconformality} given in the introduction.
We give it in the following somewhat more general form.

\begin{theorem}\label{thm:quasiconformality in text}
Suppose that $(Y,d_Y,\mu_Y)$ is another complete metric space
that supports a $1$-Poincar\'e inequality,
such that $\mu_Y$ is doubling and
\begin{equation}\label{eq:mass bounds for X and Y}
\mu(B(x,r))\ge C_0^{-1}r^p\quad\textrm{and}\quad\mu_Y(B(y,r))\le C_0 r^p
\end{equation}
for all $x\in X$, $y\in Y$, $r>0$, and a constant $C_0> 0$.
Suppose $f\colon X\to Y$ is a homeomorphism such that for every collection
of surfaces $\mathcal L=\mathcal L(E,F;\Om)$ with $\Om\subset X$
nonempty, open and bounded and $E,F\subset \Om$ compact,
we have
\[
\Mod_{\frac p{p-1}}(f\mathcal L)\le C_0 \Mod_{\frac p{p-1}}(\mathcal L),
\]
where
\[
f\mathcal L=\{P(f(U),f(\Om)\cap\cdot):\,
b_p E\cap \Om\subset \inte(U) \text{ and } b_p F\cap \Om\subset \ext (U)\}.
\]
Then $f$ is quasiconformal with a constant depending only on $C_0$, $p$,
and the space $X$.
\end{theorem}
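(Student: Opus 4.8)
The plan is to combine the duality of Theorem~\ref{thm:duality of moduli} (in the precise form given by Propositions~\ref{prop:lower-bound with compact E and F} and~\ref{prop:upper-bound}) with the classical principle that a homeomorphism which does not contract the $p$-modulus of curve families too much, from a $p$-Loewner space into a linearly locally connected space with suitable mass growth, is metric quasiconformal. Since the hypothesis is phrased in terms of surfaces, the first step is to translate it into a statement about curves; the geometric facts needed about $X$ and $Y$ are then supplied by Theorems~\ref{thm:Loewner} and~\ref{thm:linear local connectivity} together with~\eqref{eq:mass bounds for X and Y}; and the last step is the passage from curve moduli to the pointwise dilatation bound.

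\textbf{Step 1: from surfaces to curves.} Fix a bounded domain $\Om\subset X$ and disjoint nonempty compact sets $E,F\subset\Om$. Since $f$ is a homeomorphism, $f(\Om)$ is again a bounded domain, $f\Gamma(E,F;\Om)=\Gamma(f(E),f(F);f(\Om))$, and $f$ carries $\{U\colon E\subset\inte(U),\ F\subset\ext(U)\}$ bijectively onto $\{V\colon f(E)\subset\inte(V),\ f(F)\subset\ext(V)\}$, so that with $\mathcal L^*$ as in~\eqref{eq:smaller surface family} one has $f\mathcal L^*(E,F;\Om)=\mathcal L^*(f(E),f(F);f(\Om))$. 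Moreover $b_pE\subset\overline E=E$ for compact $E$, so $\mathcal L^*(E,F;\Om)\subset\mathcal L(E,F;\Om)$ and likewise $f\mathcal L^*(E,F;\Om)\subset f\mathcal L(E,F;\Om)$; in particular the hypotheses of Proposition~\ref{prop:upper-bound} hold since $b_pE\setminus E=\emptyset$. Applying, in this order, the lower bound of Proposition~\ref{prop:lower-bound with compact E and F} in $Y$, then the inclusion $f\mathcal L^*\subset f\mathcal L$ together with the hypothesis $\Mod_{\frac p{p-1}}(f\mathcal L)\le C_0\Mod_{\frac p{p-1}}(\mathcal L)$, and then the upper bound of Proposition~\ref{prop:upper-bound} in $X$, and rearranging, we obtain
\[
\Mod_p\big(\Gamma(f(E),f(F);f(\Om))\big)\ \ge\ c_*\,\Mod_p\big(\Gamma(E,F;\Om)\big)
\]
for a constant $c_*=c_*(C_0,p,X,Y)>0$; the degenerate cases $\Mod_{\frac p{p-1}}(\mathcal L)\in\{0,\infty\}$ are disposed of directly by the same two propositions. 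Thus $f$ does not decrease the $p$-modulus of these curve families by more than a fixed factor.

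\textbf{Step 2: geometry of $X$ and $Y$.} By Theorem~\ref{thm:Loewner} the lower mass bound on $X$ makes $X$ a $p$-Loewner space; both spaces are quasiconvex; and by Theorem~\ref{thm:linear local connectivity} the upper mass bound on $Y$ makes $Y$ linearly locally connected. Two estimates will be used. Testing the family $\Gamma\big(\overline B(y,s),Y\setminus B(y,S);Y\big)$ against the truncated logarithm $u=\min\{1,\max\{0,\log(S/d(\cdot,y))/\log(S/s)\}\}$ and summing the upper mass bound of~\eqref{eq:mass bounds for X and Y} over dyadic annuli gives
\[
\Mod_p\big(\Gamma(\overline B(y,s),Y\setminus B(y,S);Y)\big)\ \le\ C\,\big(\log(S/s)\big)^{1-p}\qquad(0<2s\le S),
\]
which tends to $0$ as $S/s\to\infty$. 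On the other side, the $p$-Loewner property of $X$ (with properness and quasiconvexity) will supply a positive lower bound $c_1$, depending only on the data, for the $p$-modulus of the curve families joining two suitably placed continua at a given scale near a point, in the form needed for such modulus comparisons; cf.\ \cite{HK, HKST}.

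\textbf{Step 3: the dilatation bound.} Fix $x_0\in X$ and a small $r>0$, write $y_0=f(x_0)$, $L=L_f(x_0,r)$ and $\ell=l_f(x_0,r)$; properness of $f$ gives $0<\ell\le L<\infty$ and that the infimum defining $\ell$ is attained at some $v\in X\setminus B(x_0,r)$, while $L$ is attained at some $w\in\overline B(x_0,r)$. Suppose $L/\ell$ is large. Following the classical scheme one constructs two compact connected test sets $E,F$ contained in a ball $\Om=B(x_0,Mr)$, with $\diam_XE$, $\diam_XF$ and $\dist_X(E,F)$ comparable to the scale at which they live, so that Step~2 yields $\Mod_p(\Gamma(E,F;\Om))\ge c_1$, and at the same time
\[
f(E)\subset\overline B(y_0,A\ell)\qquad\text{and}\qquad f(F)\subset Y\setminus B(y_0,L/A)
\]
for a fixed constant $A$: here $E$ may be taken to contain $x_0$ and $v$ --- for instance as the $f$-preimage of a quasigeodesic in $Y$ joining $y_0$ to $f(v)$, which automatically has $X$-diameter at least $r$ --- while $F$ runs from near the sphere $\{d_X(\cdot,x_0)=r\}$ toward $w$; controlling $\diam_XF$ from below requires a case distinction according to whether $f^{-1}\big(B(y_0,L/A)\big)$ has small or large diameter near $x_0$, resolved using the quasiconvexity and linear local connectivity of the two spaces. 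Granting this, Step~1 gives $\Mod_p\big(\Gamma(f(E),f(F);f(\Om))\big)\ge c_*c_1$; but every curve joining $f(E)$ and $f(F)$ also joins $\overline B(y_0,A\ell)$ and $Y\setminus B(y_0,L/A)$, so by Step~2
\[
c_*c_1\ \le\ \Mod_p\big(\Gamma(\overline B(y_0,A\ell),Y\setminus B(y_0,L/A);Y)\big)\ \le\ C\,\big(\log(L/(A^2\ell))\big)^{1-p}.
\]
This forces $L/\ell\le H$ for a constant $H=H(C_0,p,X,Y)$, uniformly in $x_0$ and in all small $r$, so $\limsup_{r\to0^+}L_f(x_0,r)/l_f(x_0,r)\le H$ for every $x_0\in X$, i.e.\ $f$ is quasiconformal. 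The main obstacle is Step~3 --- the construction of test sets $E,F$ realizing simultaneously the required metric geometry in $X$ and the stated image-separation in $Y$ under the weaker one-sided mass bounds (in place of Ahlfors regularity); Steps~1 and~2 are, by contrast, a fairly routine assembly of the duality estimates and of known facts about Loewner and linearly locally connected spaces.
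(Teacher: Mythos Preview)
Your Steps~1 and~2 are correct and essentially identical to the paper's argument: the same chain (Proposition~\ref{prop:lower-bound with compact E and F} applied in $Y$ to $f\mathcal L^*=\mathcal L^*(f(E),f(F);f(\Om))$, then $f\mathcal L^*\subset f\mathcal L$ together with the hypothesis, then Proposition~\ref{prop:upper-bound} in $X$) yields $\Mod_p(f\Gamma)\ge c_*\Mod_p(\Gamma)$, and the Loewner property, linear local connectivity, and logarithmic modulus estimate are exactly the ingredients the paper uses. One small omission: to invoke Proposition~\ref{prop:lower-bound with compact E and F} in $Y$ you need $\Mod_{p/(p-1)}(f\mathcal L^*)<\infty$, which the paper checks via the relative isoperimetric inequality.

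The gap you flag in Step~3 is real, and your proposed mixed construction (building $E$ from a quasigeodesic in $Y$ but $F$ partly in $X$) is more complicated than necessary. The paper avoids the case distinction entirely by defining \emph{both} continua as preimages from $Y$. After a biLipschitz change making $X$ and $Y$ geodesic (so closed balls are connected), set $E:=f^{-1}(\overline B(y_0,\ell))$ and $F:=f^{-1}(F_*)$, where $F_*$ is the component of $\overline B(y_0,M)\setminus B(y_0,L/C_1)$ containing the point $y$ realizing $L$, with $M$ a fixed large number. Linear local connectivity of $Y$ forces $F_*\supset B(y_0,M/C_1)\setminus B(y_0,L)$, so $F_*$ is a nondegenerate continuum. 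The image separation $f(E)\subset\overline B(y_0,\ell)$ and $f(F)\subset Y\setminus B(y_0,L/C_1)$ is then automatic, and the logarithmic bound for $\Mod_p(f\Gamma)$ follows immediately. For the Loewner bound in $X$ one simply observes: $x_0\in E$; the point $z\in X\setminus B(x_0,r)$ realizing $\ell$ also lies in $E$, so $\diam E\ge r$; the point $f^{-1}(y)\in\overline B(x_0,r)$ lies in $F$, so $\dist(E,F)\le r$; and since $M$ is fixed while $\ell\to 0$ as $r\to 0$, one has $\diam E\to 0$ while $\diam F$ stays bounded away from zero, whence $\diam E<\diam F$ for all sufficiently small $r$. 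Thus $\Delta(E,F)\le 1$ and $\Mod_p(\Gamma)\ge\phi(1)$, with no case analysis needed.
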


Of course, \eqref{eq:mass bounds for X and Y} is satisfied in particular
if $X$ and $Y$ are both
Ahlfors $p$-regular; recall Definition \ref{def:doubling and Ahlfors regularity}.
Also recall the definitions of $L_f$ and $l_f$ from Definition~\ref{def:dilations}.

\begin{proof}
As complete metric spaces equipped with a doubling measure and supporting a Poincar\'e
inequality, $X$ and $Y$ are \emph{quasiconvex}, see e.g. \cite[Theorem 4.32]{BB},
and so for each of them
a biLipschitz change in the metric gives a geodesic space
(see Section 4.7 in \cite{BB}).
Since the theorem is easily seen to be invariant under biLipschitz
changes in the metrics, we can assume that $X$ and $Y$ are geodesic.

We want to apply Proposition~\ref{prop:lower-bound with compact E and F}
and Proposition~\ref{prop:upper-bound} to suitable sets defined via the homeomorphism
$f$. Fix $x\in X$ and $r>0$ and let $L:=L_f(x,r)$ and $l:=l_f(x,r)>0$.
Suppose also that $L>2C_1 l$, where $C_1$ is the constant from
Theorem \ref{thm:linear local connectivity}.
By choosing $r$ sufficiently small, we have $L<\diam (Y)/4$ (we can assume that
$\diam(Y)>0$).
Since $f(\overline{B}(x,r))$ is compact,
there exists $y\in f(\overline{B}(x,r))$ such that $d_Y(f(x),y)=L$.

Let $E:=\fin(\overline{B}(f(x),l))$,
and $F:=f^{-1}(F_*)$ where $F_*$ is the maximal connected set
containing $y$ and contained in
$\overline{B}(f(x),M)\setminus B(f(x),L/C_1)$, for some fixed $M\ge 2C_1 L$.
By Theorem \ref{thm:linear local connectivity}
(note that here we use the upper bound in \eqref{eq:mass bounds for X and Y})
we have
\begin{equation}\label{eq:Fstar contains}
F_*\supset B(f(x),M/C_1)\setminus B(f(x),L).
\end{equation}
Note that balls are connected in geodesic spaces,
and $f$ is a homeomorphism, so $E$ and $F$ are connected.
Both $E$ and $F$ are moreover closed, and since
$X$ and $Y$ are proper, $f$ and $f^{-1}$ map bounded sets to bounded sets,
and so $E$ and $F$ are also bounded and thus compact.
Since $Y$ is connected, the set $F_*$ and thus also the set $F$ consists of at
least 2 points and so $\diam(F)>0$.
If $r\to 0$ then $\diam(E)\to 0$,
and thus by choosing $r$ even smaller if necessary,
we can assume that $\diam (E)$ is less than $\diam (F)$.
Note that $\Om:=f^{-1}(B(f(x),M+1))$ is also bounded.
Let $\Gamma=\Gamma(E,F;\Omega)$ and $\mathcal{L}=\mathcal L(E,F;\Omega)$.

For the family $f\Gamma$ consisting of the 
curves $f\circ\gamma$, with $\gamma\in\Gamma$, we have
\[
f\Gamma=\Gamma(\overline{B}(f(x),l),F_*;B(f(x),M+1)).
\]
From this it is easy to see that every curve in $\Gamma(E,F;X)$ has a subcurve in $\Gamma$,
and so $\Mod_p(\Gamma)=\Mod_p(\Gamma(E,F;X))$; see e.g. \cite[Lemma 1.34(c)]{BB}.
Notice that $\fin (y)\in F\cap \overline{B}(x,r)$,
and we know that $x\in E$, so $\dist(E,F)\le r$.
It is straightforward to show that there is some $z\in X\setminus B(x,r)$
with $d_Y(f(x),f(z))=l$.
Thus $r\le \diam(E)$, which we noted to be less than
$\diam (F)$, and so
\[
\frac{\dist (E,F)}{\min\{\diam(E),\diam(F)\}}\le \frac rr = 1.
\]
By Theorem \ref{thm:Loewner} we know that $X$ is a Loewner space (note that
here we need the lower mass bound in \eqref{eq:mass bounds for X and Y}), and so
\begin{equation}\label{eq:Gamma-phi}
\Mod_p(\Gamma)=\Mod_p(\Gamma(E,F;X))\ge \phi(1)>0,
\end{equation}
where $\phi$ is the Loewner function for $X$.
We observe that every curve in $f\Gamma$ has a subcurve in the family
$\Gamma(\overline{B}(f(x),l),Y\setminus B(f(x),L/C_1);Y)$.
Thus
\[
\Mod_p(f\Gamma)\le \Mod_p(\Gamma(\overline{B}(f(x),l),Y\setminus B(f(x),L/C_1));Y).
\]
Now by Proposition 5.3.9 in \cite{HKST15} we have
\begin{equation}\label{eq:Mod of f Gamma}
\Mod_p(f\Gamma)\le C_2\left(\log\dfrac {L}{C_1 l} \right)^{1-p}
\end{equation}
for a constant $C_2$ depending only
on $C_0$ and $p$
(here we need the upper mass bound
in \eqref{eq:mass bounds for X and Y}).

Since $L<\diam(Y)/4$ and $Y$ is connected, by \eqref{eq:Fstar contains}
there exists a ball $B\subset F_*$ with $\rad(B)=L/2$.
Then from the relative isoperimetric inequality of
Definition \ref{def:relative isoperimetric inequality} and the doubling property
of $\mu_Y$ we see that for every $U\in f\mathcal L$,
\[
P(U,B(f(x), 2\lambda M))\ge 
 (2C_I M)^{-1}\min \{\mu_Y(B(f(x),l)),\mu_Y(B)\}=:c>0.
\]
It follows that $\Mod_{p/(p-1)}(f\mathcal L)<\infty$, as
$c^{-1}\chi_{B(f(x),2 \lambda M)}$ is an admissible test function.
Recall the definition of the family $\mathcal L^*\subset\mathcal L$
from \eqref{eq:smaller surface family}.
For this family it is easy to verify (since $f$ is a homeomorphism) that
\[
f\mathcal L^*=\mathcal L^*(\overline{B}(f(x),l),F^*;B(f(x),M+1)).
\]
Thus by Proposition~\ref{prop:lower-bound with compact E and F}
and the assumption of the quasi-preservation of modulus of surfaces,
\[
1 \le \Mod_{\frac p{p-1}}(f\mathcal L^*)^{\frac{p-1}p} \Mod_p(f\Gamma)^{\frac 1p}
\le C_0^{\frac{p-1}p} \Mod_{\frac p{p-1}}(\mathcal L)^{\frac{p-1}p} \Mod_p(f\Gamma)^{\frac 1p}.
\]
Therefore, by Proposition~\ref{prop:upper-bound} and since we had $\Mod_p(\Gamma)\neq 0$
(recall \eqref{eq:Gamma-phi})
\[
\Mod_p(\Gamma)^{\frac 1p} \le C_0^{\frac{p-1}p} \Mod_p(\Gamma)^{\frac 1p} \Mod_{\frac p{p-1}}(\mathcal L)^{\frac{p-1}p} \Mod_p(f\Gamma)^{\frac 1p} \le C_0^{\frac{p-1}p} C \Mod_p(f\Gamma)^{\frac 1p}. 
\]
Combining this with \eqref{eq:Gamma-phi} and \eqref{eq:Mod of f Gamma}, we have
\[
\phi(1)\le \Mod_p(\Gamma)\le C_0^{p-1} C^p C_2 \left(\log\frac {L}{C_1 l}\right)^{1-p}.
\]
Thus
\[
\frac{L}{l}\le C_1\exp{\left(C_0 C^\frac p{p-1} 
C_2^\frac 1{p-1}\phi(1)^{\frac 1{1-p}}\right)}.
\]
Recall that we were assuming $L>2C_1l$; in conclusion
\[
\limsup_{r\to 0^+} \frac {L_f(x,r)}{l_f(x,r)} 
\le C_1\max\left\{2,\exp{\left(C_0 C^\frac p{p-1} 
C_2^\frac 1{p-1}\phi(1)^{\frac 1{1-p}}\right)}\right\}
\]
for every $x\in X$.
Therefore $f$ is quasiconformal.
\end{proof}

\begin{remark}
Note that in the above proof we employed the family $\mathcal L^*$ because
it is not
clear that
\[
f \mathcal L=\mathcal L(\overline{B}(f(x),l),F_*;B(f(x),M+1)).
\]
This is the case because it is not clear that
\[
b_p E\subset \inte(f^{-1}(U))\quad\textrm{and}\quad
b_p F\subset \ext (f^{-1}(U))
\]
for every $U\in \mathcal L(\overline{B}(f(x),l),F_*;B(f(x),M+1))$,
as would be required in the definition of
$\mathcal L(E,F;\Om)$.
In other words, the image under $f$ or $f^{-1}$
of every ``separating surface'' 
might not be a ``separating surface''.
It is known, at least in Ahlfors regular spaces,
that a quasiconformal mapping
(whose inverse is also quasiconformal)
preserves the \emph{measure-theoretic}
interior, exterior, and boundary, see \cite{GK}, \cite[Theorem 6.1]{KMS},
and \cite[Lemma 4.8]{JLS}.
If we knew a similar property to hold for
capacitary thickness points, then the above problem would not arise.
Thus we ask:
\begin{itemize}
\item If $f\colon X\to Y$ is a quasiconformal mapping, do we have
$f(b_p E)=b_p f(E)$ for every (closed) set $E\subset X$?
\end{itemize}
\end{remark}

\noindent Address:

\vskip1ex plus 1ex minus 0.5ex
\noindent R.J.: Department of Mathematical Sciences,\\
P. O. Box 210025,
University of Cincinnati,
Cincinnati, OH 45221-0025,
U.S.A.

\vskip1ex plus 1ex minus 0.5ex
\noindent E-mail:
{\tt jones3rh@mail.uc.edu}
\vskip1ex plus 1ex minus 0.5ex
\noindent P.L.: Institut f\"ur Mathematik,
Universit\"at Augsburg,\\
Universit\"atsstr. 14, 86159 Augsburg, Germany
\vskip1ex plus 1ex minus 0.5ex
\noindent E-mail: {\tt panu.lahti@math.uni-augsburg.de}

\end{document}